\documentclass[
final
]{dmtcs-episciences}


\usepackage[utf8]{inputenc}
\usepackage{subfigure}
\usepackage[round]{natbib}
\usepackage{amsmath}
\usepackage{amssymb}
\usepackage{amsthm}
\newtheorem{theorem}{Theorem}[section]
\newtheorem{lemma}[theorem]{Lemma}

\newtheorem{proposition}[theorem]{Proposition}
\newtheorem{conjecture}[theorem]{Conjecture}
\newtheorem{problem}[theorem]{Problem}
\theoremstyle{definition}
\newtheorem{definition}[theorem]{Definition}

%


\author{Ruixia Wang\affiliationmark{1}}
\title{A sufficient condition for a balanced bipartite digraph to be hamiltonian}
\affiliation{
  School of Mathematical Sciences, Shanxi University, China}
\keywords{balanced bipartite digraph,degree condition,hamiltonian cycle,dominating pair of vertices}

\received{2015-6-19}
\revised{2016-8-16}
\accepted{2017-10-3}
\citestyle{nature}
\begin{document}
\bibliographystyle{plainnat}

\publicationdetails{19}{2017}{3}{11}{1239}
\maketitle

\begin{abstract}
We describe a new type of sufficient condition for a balanced bipartite digraph to be hamiltonian. Let $D$ be a balanced bipartite digraph and $x,y$ be distinct vertices in $D$.  $\{x, y\}$ dominates a vertex $z$ if $x\rightarrow z$ and $y\rightarrow z$; in this case, we call the pair $\{x, y\}$ dominating. In this paper, we prove that a strong balanced bipartite digraph $D$ on $2a$ vertices contains a hamiltonian cycle if, for every dominating pair of vertices $\{x, y\}$, either $d(x)\ge 2a-1$ and $d(y)\ge a+1$ or $d(x)\ge a+1$ and $d(y)\ge 2a-1$. The lower bound in the result is sharp.
\end{abstract}

\section{Introduction}

In this paper, we consider finite digraphs without loops and multiple arcs. For the convenience of
the reader, we provide all necessary terminology and notation in one section, Section 2.  The cycle problems for digraphs
are of the central problems in graph theory and its applications \cite {bang}.  A digraph $D$ is called hamiltonian if it contains a
hamiltonian cycle, i.e., a cycle that includes every vertex of $D$.  There are many degree or degree sum conditions for hamiltonicity\cite {adamus1, adamus2, bang3, bang2, bermond, manoussakis,meyniel}. The following result of Meyniel for existence of hamiltonian cycles in digraphs is basic and famous.

\begin{theorem}\cite{meyniel} Let $D$ be a strong digraph on $n$ vertices where $n\ge 3$. If $d(x)+d(y)\ge 2n-1$ for all pairs of non-adjacent vertices $x, y$ in $D$, then $D$ is hamiltonian.\end{theorem}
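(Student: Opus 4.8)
The plan is to prove the contrapositive by an extremal longest-cycle argument combined with a path-rerouting (insertion) principle, in the style of Bondy and Thomassen. That is, I would assume that $D$ is strong, satisfies $d(x)+d(y)\ge 2n-1$ for every pair of non-adjacent vertices, and is \emph{not} hamiltonian, and then derive a contradiction. First I would fix a longest cycle $C=v_1v_2\cdots v_kv_1$, with indices read modulo $k$. Since $D$ is strong and $n\ge 3$ it contains a cycle, and non-hamiltonicity forces $k<n$, so at least one vertex lies off $C$. Writing $R=D-V(C)$, strong connectivity supplies arcs both into and out of $R$, and the object I will actually analyze is an \emph{extremal} attachment of $R$ to $C$: a path $P=x_0x_1\cdots x_p$ with $x_0\in V(C)$ and $x_1,\dots,x_p\notin V(C)$, chosen of maximum length.

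The engine of the argument is an insertion principle forced by the maximality of $C$: no off-cycle material may be spliced into $C$ to produce a longer cycle. In the simplest instance, an off-cycle vertex $x$ cannot satisfy both $v_i\rightarrow x$ and $x\rightarrow v_{i+1}$ for a single index $i$, for then $x$ could be inserted between $v_i$ and $v_{i+1}$. More generally, the extremal path $P$ cannot satisfy $v_i\rightarrow x_0$ and $x_p\rightarrow v_{i+1}$ simultaneously, since the detour $v_i\,x_0x_1\cdots x_p\,v_{i+1}$ would replace the single arc $v_iv_{i+1}$ by a longer one. Thus the in-neighbors on $C$ of the initial vertex and the out-neighbors on $C$ of the terminal vertex can never occupy consecutive positions of $C$. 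Translating this ``no consecutive occupancy'' into arc restrictions, together with the extremality of $P$ (which caps how $P$ can be prolonged inside $R$), is the conceptual core.

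I would then single out a pair of vertices that these restrictions force to be non-adjacent—typically the terminal vertex $x_p$ of the extremal path and a suitably chosen $v_i\in V(C)$—and bound $d(x_p)+d(v_i)$ from above. The key counting step pairs each blocked position $v_{i+1}$ with its predecessor $v_i$ on $C$, so that of each such complementary pair of ``slots'' at most one can be realized by an arc of the designated pair; this caps the number of $C$-positions jointly usable by $x_p$ and $v_i$ at roughly $k$ rather than $2k$, and the extremality of $P$ limits their contributions inside $R$. Summing the available in- and out-neighborhoods carefully yields $d(x_p)+d(v_i)\le 2n-2$. Since the pair is non-adjacent, the hypothesis forces $d(x_p)+d(v_i)\ge 2n-1$, a contradiction, so $D$ must be hamiltonian.

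The main obstacle is exactly this degree count: one must select the non-adjacent pair correctly and then pair up the forbidden positions along $C$ so that no position is double-counted and precisely one unit is lost from the trivial maximum of $2n$. This is delicate because arcs may run between $P$ and $R\setminus V(P)$, and between the two path endpoints and several parts of $C$, in a number of configurations that must all be reconciled. Forcing the loss to be exactly one—neither more nor less—is what both produces the contradiction and explains why the threshold $2n-1$ is sharp.
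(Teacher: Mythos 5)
The paper does not prove this statement at all: it is Meyniel's classical theorem, quoted with a citation to the literature, and the paper only uses it as background. So your proposal can only be measured against the known proofs (Meyniel's original argument, or the short proof of Bondy and Thomassen), and against those it is an outline in the right spirit --- longest cycle, insertion principle, degree count on a non-adjacent pair --- but it has a genuine gap at precisely the step you yourself identify as the core.

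The asserted bound $d(x_p)+d(v_i)\le 2n-2$ does not follow from the ``no consecutive occupancy'' principle you state. That principle restricts arcs between \emph{off-cycle} vertices and $C$ (in-arcs of $C$ to the initial vertex of the bypass and out-arcs from its terminal vertex), but the second vertex of your pair, $v_i$, lies \emph{on} $C$, and nothing in the insertion principle limits $v_i$'s adjacencies within $C$: it may be adjacent to every other cycle vertex in both directions, contributing up to $2(k-1)$ to its degree, so the joint count of ``$C$-positions'' used by the pair is not capped near $k$. Two further holes: you never establish that your chosen pair is non-adjacent (if $x_p$ is adjacent to all of $V(C)$, no usable pair on $C$ exists and the hypothesis cannot be invoked), and single-vertex or single-path insertion is known to be insufficient for Meyniel's theorem --- the Bondy--Thomassen proof, and the treatment in Bang-Jensen and Gutin's book, need the \emph{multi-insertion} technique, in which different segments of the off-cycle path are inserted between different arcs of $C$, together with a minimum-gap choice of bypass, to rule out the extremal configurations. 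Without these ingredients the counting step, which is the entire content of the theorem, cannot be completed as described; note, for comparison, that the much weaker Lemma 3.3 of this paper (the bipartite analogue of your occupancy count) only yields $d^-_{V(C)}(u)+d^+_{V(C)}(v)\le m$ for two off-cycle endpoints, which is exactly the part of the estimate your plan can deliver, and is far from $d(x_p)+d(v_i)\le 2n-2$ for a mixed pair.
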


In \cite{bang2}, Bang-Jensen, Gutin and Li described a type of sufficient condition for a digraph to be hamiltonian. Conditions of this type combine local structure of the digraph with conditions on the degrees of non-adjacent vertices. Let $x, y$ be distinct vertices in $D$. If there is an arc from $x$ to $y$ then we say that $x$ dominates $y$ and write $x\rightarrow y$. $\{x, y\}$ is dominated by a vertex $z$ if $z\rightarrow x$ and $z\rightarrow y$. Likewise, $\{x,y\}$ dominates a vertex $z$ if $x\rightarrow z$ and $y\rightarrow z$; in this case, we call the pair $\{x,y\}$ dominating.

\begin{theorem}\cite{bang2}\label{digloc} Let $D$ be a strong digraph. Suppose that, for every dominated pair of non-adjacent vertices $\{x,y\}$, either $d(x)\ge n$ and $d(y)\ge n-1$ or $d(x)\ge n-1$ and $d(y)\ge n$. Then $D$ is hamiltonian.
\end{theorem}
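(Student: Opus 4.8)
The plan is to argue by contradiction using a longest cycle. Suppose $D$ is strong and satisfies the stated condition on dominated non-adjacent pairs, but has no hamiltonian cycle. Let $C = c_1 c_2 \cdots c_t c_1$ be a longest cycle of $D$; since $D$ is not hamiltonian, $t < n$ and $R := V(D)\setminus V(C) \neq \emptyset$. Because $D$ is strong there are arcs from $C$ into $R$, so I would fix a path that leaves $C$, enters $R$, and eventually returns to $C$, and work with its first $R$-vertex $x_1$, which has an in-neighbour $c_i$ on $C$. Indices of the $c_i$ are read modulo $t$, and $c_i^+ := c_{i+1}$ denotes the successor of $c_i$ on $C$.

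The key structural step is to manufacture dominated non-adjacent pairs from this configuration. Whenever $c_i \to x_1$, the vertex $c_i$ also satisfies $c_i \to c_i^+$, so $c_i$ dominates the pair $\{x_1, c_i^+\}$. Maximality of $C$ forbids $x_1 \to c_i^+$, since otherwise inserting $x_1$ between $c_i$ and $c_i^+$ would produce a longer cycle. Hence for each in-neighbour $c_i$ of $x_1$, either $\{x_1, c_i^+\}$ is a dominated \emph{non-adjacent} pair, in which case the hypothesis forces $d(x_1)\ge n-1$ (with $d(c_i^+)\ge n$) or $d(x_1)\ge n$, or else $c_i^+ \to x_1$. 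In the latter, adjacent case the in-arc $c_i^+ \to x_1$ reproduces the same situation at $c_i^+$, so iterating along $C$ gives a clean dichotomy: either some in-neighbour yields a non-adjacent dominated pair and hence a near-$n$ lower bound on $d(x_1)$, or the adjacencies propagate so that every vertex of $C$ points to $x_1$, forcing $d^-(x_1)\ge t$ while $x_1$ has no out-neighbour on $C$.

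The final step converts large degree into a contradiction via an insertion/extension argument in the spirit of Bondy and Thomassen, applied not just to $x_1$ but to a minimally chosen path $P = x_1 \cdots x_k$ of $R$-vertices threaded from $C$ back to $C$. A vertex off the longest cycle with degree at least $n-1$ has so many incident arcs that, once the arcs forbidden by maximality of $C$ are discarded (no in-neighbour $c_i$ of $x_1$ can be followed by $x_1 \to c_i^+$, and dually for the exit vertex $x_k$), a pigeonhole count over the $t$ positions of $C$ forces consecutive vertices $c_j, c_j^+$ with $c_j \to x_1$ and $x_k \to c_j^+$. Inserting all of $P$ there yields a cycle longer than $C$, the desired contradiction. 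Here the degree bound supplies the abundance of arcs, while the longest-cycle constraints supply the structural restrictions that make the count close.

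I expect the main obstacle to be the rigid, fully adjacent branch of the dichotomy above, where the degree hypothesis never applies because the manufactured pairs are always adjacent. In that branch one has a very restrictive arc pattern (for instance all of $V(C)$ dominating the entry vertex, with no return arc to $C$), and the contradiction must be extracted either by extending $C$ directly from the accumulated adjacencies or by relocating the entire analysis to the next vertex of the $R$-path and arguing that this relocation cannot continue indefinitely. Balancing the two mechanisms — the degree condition versus raw adjacency — and ensuring the insertion argument remains valid for a whole path of out-of-cycle vertices rather than a single vertex, is the delicate part of the proof.
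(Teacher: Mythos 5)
You are attempting to prove a statement that this paper does not prove at all: Theorem \ref{digloc} is quoted from Bang-Jensen, Gutin and Li \cite{bang2}, so your sketch can only be measured against the known proof there, which rests on the multi-insertion technique. Your opening move is sound and is indeed the standard one: if $c_i\rightarrow x_1$ then $c_i$ dominates $\{x_1,c_i^+\}$, maximality of $C$ forbids $x_1\rightarrow c_i^+$, so either the hypothesis yields $d(x_1)\ge n-1$ or $c_i^+\rightarrow x_1$ and the situation propagates around $C$. But you misplace the difficulty. The ``fully adjacent'' branch that you single out as the main obstacle is in fact immediate: if every vertex of $C$ dominates $x_1$, take a shortest path $x_1x_2\ldots x_k c_j$ from $x_1$ back to $C$ (it exists by strongness, and all of $x_1,\ldots,x_k$ lie in $R$); since $c_{j-1}\rightarrow x_1$ and $x_k\rightarrow c_j$, this path inserts between $c_{j-1}$ and $c_j$, giving a longer cycle --- a contradiction with no further work.

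The genuine gap is your final counting step, and it fails for two separate reasons. First, you have no degree bound at the exit vertex $x_k$: the pair one would manufacture there, $\{x_k,c_{j-1}\}$ with $x_k\rightarrow c_j$ and $c_{j-1}\rightarrow c_j$, is a \emph{dominating} pair, and the hypothesis of this theorem covers only \emph{dominated} pairs; within a fixed digraph the argument is not self-dual, so ``dually for the exit vertex'' produces only the maximality restriction, not a degree bound. Second, even granting $d(x_1)\ge n-1$ and $d(x_k)\ge n-1$, these bound degrees in all of $D$: the incident arcs may be concentrated inside $R$, or oriented so as to contribute only to $d^+_{V(C)}(x_1)$ and $d^-_{V(C)}(x_k)$, which are useless for insertion. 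The pigeonhole you invoke needs $d^-_{V(C)}(x_1)+d^+_{V(C)}(x_k)\ge t+1$, and nothing you establish implies it; for instance $d(x_1)\ge n-1$ is compatible with $d_{V(C)}(x_1)$ as small as $2t-n+1$, which is far below $t+1$ whenever $t<n$. This is precisely why the actual proof in \cite{bang2} does not insert the whole path into a single gap: it partitions the path and multi-inserts different segments into different gaps of $C$, supported by degree estimates for non-adjacent pairs inside $R$ and between $R$ and $V(C)$. That machinery is absent from your sketch, and your own hedging (``the contradiction must be extracted either \ldots or \ldots'') concedes that the argument does not close.
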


In \cite{bang2}, Bang-Jensen, Gutin and Li raised the following conjecture.

\begin{conjecture}\cite{bang2}\label{conjecture} Let $D$ be a strong digraph. Suppose that $d(x)+d(y)\ge 2n-1$ for every pair of dominating non-adjacent and every pair of dominated non-adjacent vertices $\{x, y\}$. Then $D$ is hamiltonian.
\end{conjecture}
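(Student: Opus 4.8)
The plan is to argue by contradiction with the classical extremal method for Meyniel-type theorems, arranged so that the \emph{two-sided} hypothesis (on both dominating and dominated non-adjacent pairs) is exactly what drives the count. Write $n:=|V(D)|$ and assume $D$ is strong, satisfies the degree condition, but has no hamiltonian cycle. Let $C=x_0x_1\cdots x_{c-1}x_0$ be a longest cycle, indices read modulo $c=|V(C)|$, and let $x_k^{+}$, $x_k^{-}$ denote the successor and predecessor of $x_k$ on $C$. Put $R=V(D)\setminus V(C)\neq\emptyset$. Since $D$ is strong there is a path with both endpoints on $C$ and all interior vertices in $R$; among all such paths choose a \emph{bridge} $P=x_a\,z_1z_2\cdots z_p\,x_b$ with $z_1,\dots,z_p\in R$ and with $p$ as small as possible. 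Everything hinges on the interaction between the arcs entering and leaving the bridge and the cyclic order on $C$.

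First I would record the non-adjacencies forced by the maximality of $C$. If $x_a\rightarrow z_1\rightarrow x_a^{+}$ held, then $x_0\cdots x_a z_1 x_a^{+}\cdots x_0$ would be a cycle of length $c+1$, a contradiction; hence $z_1\not\rightarrow x_a^{+}$, and symmetrically $x_b^{-}\not\rightarrow z_p$. Now the key observation: since $x_a\rightarrow z_1$ and $x_a\rightarrow x_a^{+}$, the pair $\{z_1,x_a^{+}\}$ is \emph{dominated} by $x_a$; and since $z_p\rightarrow x_b$ and $x_b^{-}\rightarrow x_b$, the pair $\{z_p,x_b^{-}\}$ is \emph{dominating}, with common out-neighbour $x_b$. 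This is where the conjecture's symmetric hypothesis earns its keep: the entry obstruction is governed by the dominated-pair condition and the exit obstruction by the dominating-pair condition. Moreover, passing to the converse digraph swaps dominated and dominating pairs while preserving both the hypothesis and hamiltonicity, so it suffices to analyse the entry end in detail and transfer the exit end by reversal.

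Granting that these pairs are genuinely non-adjacent, I would invoke the hypothesis to obtain $d(z_1)+d(x_a^{+})\ge 2n-1$ and, symmetrically, $d(z_p)+d(x_b^{-})\ge 2n-1$, and convert them into arc counts between $\{z_1,\dots,z_p\}$ and $V(C)$. The guiding estimate is the single-gap insertion criterion: the bridge can be inserted at a gap $(x_k,x_{k+1})$ whenever $x_k\rightarrow z_1$ and $z_p\rightarrow x_{k+1}$, so if the in-neighbours of $z_1$ on $C$ together with the out-neighbours of $z_p$ on $C$ number more than $c$, a pigeonhole over the $c$ gaps yields an insertion and hence a longer cycle. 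The task is therefore to show that the degree sums, after subtracting the contributions that cannot land usefully on $C$ (arcs inside $R$, the forbidden arcs to $x_a^{+}$ and from $x_b^{-}$ recorded above, and the at most two arcs of the bridge itself), still force these two cycle-neighbourhoods to overflow. The few small-order cases where overflow is not yet decisive would be cleared by hand.

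The main obstacle is making the second step rigorous: turning ``one missing arc'' into genuine non-adjacency, and above all controlling the general bridge with $p\ge 2$. For a single interior vertex the insertion analysis is clean, but for longer bridges the ways insertion can fail proliferate, and one must track the out-neighbourhood of $z_1$ and the in-neighbourhood of $z_p$ simultaneously across all gaps while keeping the common neighbour distinct from the pair. Guaranteeing that every obstruction produces a pair of the \emph{admissible} (dominating or dominated) type, rather than an ordinary non-adjacent pair to which the weakened hypothesis gives nothing, is precisely the gap that keeps the full statement open; a complete proof would presumably require either a more refined choice of extremal configuration or a structural classification of the non-insertible bridges.
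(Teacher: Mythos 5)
The statement you were asked to prove is Conjecture~\ref{conjecture}, and the paper does not prove it: it is quoted from Bang-Jensen, Gutin and Li, and the paper explicitly records that it ``is still open and seems quite difficult.'' The only results in that direction cited there are partial: Bang-Jensen, Guo and Yeo proved hamiltonicity when the bound $2n-1$ is strengthened to $\frac{5}{2}n-4$, and they showed that the conjectured hypothesis at least forces a cycle factor. So there is no proof in the paper to compare yours against; the only honest question is whether your argument closes the conjecture, and it does not.

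Your own final paragraph concedes the decisive gap, and it is worth naming precisely why it is fatal rather than a technicality. First, the pairs to which you want to apply the hypothesis need not be admissible: from the maximality of $C$ you only get $z_1\not\rightarrow x_a^{+}$, which leaves open $x_a^{+}\rightarrow z_1$, so $\{z_1,x_a^{+}\}$ may well be adjacent, and then the degree condition says nothing about it; the same holds for $\{z_p,x_b^{-}\}$. Second, even where non-adjacency does hold, the Meyniel-type insertion count needs degree information on essentially \emph{every} pair obstructing insertion at each gap of $C$, and there is no mechanism forcing each such obstruction to produce a dominating or dominated pair; under Meyniel's full hypothesis every non-adjacent pair is usable, which is exactly what the conjecture gives up. Your reversal trick (passing to the converse digraph swaps dominating and dominated pairs) is correct but only halves the work --- it cannot manufacture admissible pairs where none exist. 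What you have written is a reasonable plan together with an accurate diagnosis of why the plan stalls; it is not a proof, and as far as this paper is concerned the statement remains open.
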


Bang-Jensen, Guo and Yeo \cite{bang3} proved that, if we replaced the degree condition $d(x)+d(y)\ge 2n-1$ with $d(x)+d(y)\ge \frac{5}{2}n-4$ in Conjecture \ref{conjecture}, then $D$ is hamiltonian. They also proved additional support for Conjecture \ref{conjecture} by showing that every digraph satisfying the condition of Conjecture \ref{conjecture} has a cycle factor. From now on, Conjecture \ref{conjecture} is still open and seems quite difficult.

In \cite{adamus2}, Adamus, Adamus and Yeo  gave a Meyniel-type sufficient condition for hamiltonicity of a balanced bipartite digraph.

\begin{theorem}\label{hcycle2}\cite{adamus2} Let $D$ be a strong balanced bipartite digraph on $2a$ vertices. If  $d(u)+d(v)\ge 3a$ for every pair of non-adjacent vertices $u, v$ in $D$, then $D$ is hamiltonian. \end{theorem}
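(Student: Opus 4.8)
My plan is to argue by contradiction from a longest cycle, using as the main engine a structural peculiarity of balanced bipartite digraphs. Write $X,Y$ for the two colour classes, so $|X|=|Y|=a$, and recall that every directed cycle alternates between $X$ and $Y$; hence all cycles (and in particular a putative hamiltonian cycle) have even length, and every vertex $v$ satisfies $d^+(v),d^-(v)\le a$ and $d(v)\le 2a$. The key observation is that any two vertices lying in the \emph{same} colour class are automatically non-adjacent, so the hypothesis gives $d(u)+d(v)\ge 3a$ for \emph{every} same-class pair $u,v$. Consequently the two smallest degrees in each part already sum to at least $3a$: at most one vertex of each part can have degree below $3a/2$, and any low-degree vertex forces every other vertex of its part to have large degree. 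This same-class degree control is what I expect to replace the usual appeal to non-adjacency in Meyniel's theorem.

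Let $C$ be a longest cycle, of length $2\ell$, and suppose for contradiction that $2\ell<2a$. Then $R=V(D)\setminus V(C)$ is non-empty and meets each part in exactly $a-\ell$ vertices, and since $D$ is strong there are arcs from $R$ into $C$ and from $C$ into $R$. The decisive bipartite feature is that a single vertex can never be inserted into $C$: inserting $z$ into an arc $c_i\to c_{i+1}$ would require $c_i\to z\to c_{i+1}$, forcing $c_i$ and $c_{i+1}$ into the same class, contrary to the alternation of $C$. Thus maximality of $C$ can only be exploited through even alternating detours through $R$, i.e.\ through \emph{pairs} of vertices, which is exactly the setting in which the degree-sum hypothesis is meant to be applied.

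The heart of the argument is therefore an insertion estimate for a well-chosen same-class pair. Fixing $z\in R\cap Y$ (so $N^+(z),N^-(z)\subseteq X$) and, when $a-\ell\ge 2$, a second vertex $z'\in R\cap Y$, maximality of $C$ forbids every alternating detour that would route through $z$ and $z'$ and return to $C$ with a net gain in length. I would translate each forbidden configuration into a statement that certain pairs of positions on $C$ cannot simultaneously receive an in-arc and an out-arc from $\{z,z'\}$, thereby bounding $d^+(z)+d^-(z)+d^+(z')+d^-(z')$ from above by a quantity of order $2\ell$. Since $z,z'$ lie in the same part they are non-adjacent, so $d(z)+d(z')\ge 3a$; comparing the two bounds should yield $\ell\ge a$, contradicting $2\ell<2a$. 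Strong connectivity is used to guarantee that the attaching arcs required to realise a detour genuinely exist, and the boundary case $a-\ell=1$ (when $R$ consists of a single vertex in each part) must be settled directly, by analysing how that $X$-vertex and that $Y$-vertex attach to $C$.

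The step I expect to be the main obstacle is precisely this insertion estimate. One must show that maximality of $C$ rules out not merely single insertions but all even detours through $R$, and convert this into a clean, slack-free upper bound on the combined neighbourhood of the chosen same-class pair. The bipartite parity cuts both ways: it forces the relevant neighbours to occur in compatible pairs of positions, which helps, but it also means each forbidden pattern constrains two positions of $C$ at once, so the bookkeeping is heavier than in the non-bipartite Meyniel argument. I anticipate that the governing inequality is tight exactly at the extremal configuration witnessing that $3a$ cannot be lowered, so no slack may be discarded; once the estimate is in place the contradiction is immediate, and the sharpness of the constant can be confirmed separately by exhibiting a strong non-hamiltonian balanced bipartite digraph in which every non-adjacent pair has degree sum $3a-1$.
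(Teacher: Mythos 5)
First, note that the paper does not prove this statement: Theorem \ref{hcycle2} is quoted from \cite{adamus2} as a known result, so there is no in-paper proof to compare against. Judged on its own terms, your proposal is a plan rather than a proof, and its load-bearing step is missing. The observation that same-class pairs are automatically non-adjacent, hence subject to the degree-sum bound, is correct and genuinely useful (it is also how the hypothesis gets used in \cite{adamus2} and in Lemma \ref{cyclefactor} of this paper). But the ``insertion estimate'' $d_{V(C)}(z)+d_{V(C)}(z')\le 2\ell$ for a non-adjacent same-class pair $z,z'\in R$ is exactly the point you defer, and it is not a routine consequence of the maximality of $C$. The standard merging lemma (Lemma \ref{notmerge} here) requires an \emph{arc} $uv$ outside $C$, so that deleting one arc of $C$ and adding $y_iu$ and $vx_{i+1}$ produces a longer cycle; two non-adjacent vertices of the same class admit no such two-arc detour, and any detour through both $z$ and $z'$ must pass through at least one further vertex of $R$, whose existence and adjacencies you have not controlled. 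As stated, nothing prevents $d_{V(C)}(z)=d_{V(C)}(z')=2\ell$: replacing $y_i$ by $z$ on $C$ merely produces another longest cycle, not a longer one.

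Second, even granting the estimate, the arithmetic does not close. Each of $z,z'$ may send and receive arcs to all $a-\ell$ vertices of $R\cap X$, so the best you could conclude is $3a\le d(z)+d(z')\le 2\ell+4(a-\ell)=4a-2\ell$, which yields only $\ell\le a/2$ --- the opposite of a contradiction when the longest cycle is short. To make a longest-cycle argument work one must also bound the degrees inside $R$, and that is where the real work lies. The published proof (and the proof of Theorem \ref{sharp} in this paper, which is the closest model) proceeds differently: first establish a cycle factor via the K\"onig--Hall theorem applied to the condition $|N^+(S)|\ge|S|$ (here the same-class non-adjacency is again what makes the degree hypothesis bite), and then study a longest cycle together with a $C$-bypass of minimum gap and merge. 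Your plan is missing both the cycle-factor step and the bypass machinery, and the boundary case $a-\ell=1$ is waved at rather than settled; so there is a genuine gap.
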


The main purpose of this note is to give a sharp sufficient condition for hamiltonicity of balanced bipartite digraphs similar to Theorem \ref{digloc}.

\begin{definition}
  Consider a balanced bipartite digraph $D$ on $2a$ vertices with $a\ge 2$. For $k\ge 0$, we will say that $D$ satisfies condition $B_k$ when  $$d(x)\ge 2a-k, d(y)\ge a+k\  \mbox{or}\ d(y)\ge 2a-k, d(x)\ge a+k,$$
  for any dominating pair of vertices $\{x,y\}$ in $D$.
\end{definition}

\begin{theorem}\label{mainresult} Let $D$ be a strong balanced bipartite digraph on $2a$ vertices where $a\ge 2$. If $D$ satisfies  condition $B_1$, then $D$ is hamiltonian.\end{theorem}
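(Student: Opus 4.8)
The plan is to follow the classical two-stage strategy for degree-type hamiltonicity results: first show that $D$ admits a cycle factor, then take a cycle factor $\mathcal{F}$ with the fewest cycles and prove that condition $B_1$ forces $\mathcal{F}$ to consist of a single cycle. Throughout I write $X,Y$ for the two colour classes ($|X|=|Y|=a$), and for a vertex $v$ lying on a cycle of $\mathcal{F}$ I denote by $v^+$ and $v^-$ its successor and predecessor on that cycle. I first record the guiding observation that a dominating pair $\{x,y\}$ must have $x,y$ in the same colour class (a common out-neighbour forces this), and that condition $B_1$ makes one member of every dominating pair \emph{near-universal}: if $d(x)\ge 2a-1$ then, since $d^+(x),d^-(x)\le a$, we get $d^+(x)\ge a-1$ and $d^-(x)\ge a-1$, so $x$ dominates, and is dominated by, all but at most one vertex of the opposite class.

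For the cycle factor I use the standard reformulation: a cycle factor of a balanced bipartite digraph is the same as a perfect matching in each of the two bipartite graphs of forward arcs ($X\to Y$) and backward arcs ($Y\to X$). If, say, the forward bipartite graph has no perfect matching, Hall's theorem yields $S\subseteq X$ with $|N^+(S)|<|S|$, where $N^+(S)\subseteq Y$. If $|S|=1$ this gives a vertex of out-degree $0$, contradicting strong connectivity; if $|S|\ge 2$, then since all out-arcs of $S$ land in the smaller set $N^+(S)$, pigeonhole produces $x,y\in S$ with a common out-neighbour, i.e.\ a dominating pair inside $S$. Condition $B_1$ then gives a member $x\in S$ with $d^+(x)\ge a-1$; but $d^+(x)\le|N^+(S)|\le|S|-1\le a-1$, forcing $|S|=a$, i.e.\ $S=X$ with $|N^+(X)|=a-1$. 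Hence some $y_0\in Y$ receives no arc from $X$, so $d^-(y_0)=0$, again contradicting strong connectivity. The backward direction is symmetric, so $D$ has a cycle factor. (Moreover, if every in-degree equals $1$ then strong connectivity makes $D$ a single hamiltonian cycle and we are done; so in the remaining analysis some vertex has in-degree $\ge 2$ and a near-universal vertex exists.)

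Now let $\mathcal{F}$ be a cycle factor with the minimum number of cycles, and suppose for contradiction that it has at least two cycles. Since the contraction of the cycles of $\mathcal{F}$ is strongly connected, I can pick two cycles $C_1,C_2$ joined by an arc $u\to w$ with $u\in C_1$, $w\in C_2$. Then $u$ and $w^-$ both dominate $w$, so $\{u,w^-\}$ is a dominating pair ($u\ne w^-$, as they lie on different cycles). The merging mechanism is this: if $w^-\to u^+$, then
\[
u^+\to\cdots\to u\ (\text{along }C_1)\to w\to\cdots\to w^-\ (\text{along }C_2)\to u^+
\]
is a single cycle on $V(C_1)\cup V(C_2)$, and replacing $C_1,C_2$ by it lowers the number of cycles of $\mathcal{F}$, a contradiction. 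By $B_1$, one of $u,w^-$ is near-universal. If it is $w^-$, then $w^-$ fails to dominate at most one vertex of $Y$, so $w^-\to u^+$ holds \emph{unless} $u^+$ is precisely that exceptional vertex; in the generic case the merge goes through.

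The main obstacle is exactly the residual situations where this clean merge is blocked: (i) the near-universal member of $\{u,w^-\}$ is $u$ rather than $w^-$, so that the needed return arc $w^-\to u^+$ is controlled by the low-degree member and is not forced; and (ii) $w^-$ is near-universal but its unique non-out-neighbour happens to be $u^+$. I expect to defeat these by choosing the configuration extremally and exploiting strong connectivity: one orients the argument so that the near-universal endpoint is the one supplying the return arc, using that between two mutually reachable cycles there are inter-cycle arcs in both directions, each giving a dominating pair of the form $\{u,w^-\}$ or $\{w,u^-\}$ whose \emph{controlled} arc is the one required for the merge. In the single-exception case (ii) one reroutes through a neighbouring vertex of $u$ on $C_1$, available because the near-universal vertex still reaches every \emph{other} vertex of $C_1\cap Y$ and because the second member of the pair satisfies $d\ge a+1$. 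A small number of degenerate subcases (cycles of length $2$, and small $a$) will need to be checked directly. Verifying that these rerouting options are always present, and that the extremal choice really forces the favourable orientation, is the technical heart of the argument.
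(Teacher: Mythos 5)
Your first stage (existence of a cycle factor) is correct and, under condition $B_1$, genuinely simpler than the paper's corresponding lemma: the observation that the high-degree member $x$ of a dominating pair inside a Hall violator $S$ satisfies $a-1\le d^+(x)\le |N^+(S)|\le |S|-1$ collapses the case analysis immediately, whereas the paper must work under the weaker bound $d(x)\ge 2a-2$ and consequently has to isolate the exceptional digraph $H_2$. The second stage, however, is not a proof but a plan, and the part you defer (``verifying that these rerouting options are always present\dots is the technical heart of the argument'') is precisely where all the difficulty lives. Two concrete problems. First, your fallback for obstacle (i) assumes that between two mutually reachable cycles of the minimum cycle factor there are inter-cycle arcs in both directions; strong connectivity only gives a directed path in the condensation, possibly passing through other cycles, so a fixed pair $C_1,C_2$ need not be joined both ways. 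Second, and more fundamentally, every merge you consider requires a return arc emanating from the \emph{cycle-internal} member of the relevant dominating pair (the arc $w^-\to u^+$ for the pair $\{u,w^-\}$, the arc $z^-\to v^+$ for the pair $\{v,z^-\}$, etc.), and condition $B_1$ never tells you which member of the pair is the near-universal one; an adversary can consistently place the degree $2a-1$ on the member whose out-neighbourhood is irrelevant to the merge, leaving only $d\ge a+1$ for the vertex that must supply the return arc. That bound is far too weak to force the arc, and the exceptional digraphs $H_1,H_2,H_3$ of the paper (non-hamiltonian under the marginally weaker condition $B_2$-type hypothesis of Theorem \ref{sharp}) show the situation is genuinely tight, so no soft argument will close these cases.

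The paper takes a different route for this second stage: rather than merging cycles of a minimum cycle factor, it proves the stronger Theorem \ref{sharp} by taking a \emph{longest cycle} $C$ together with a $C$-bypass of minimum gap, bounding degrees via the non-insertability lemma (Lemma \ref{notmerge}) and the minimum-gap property (Claims 1 and 2, Observation 1), and then grinding through an extensive case analysis on the gap length and on $|R\cap V_1|$; Theorem \ref{mainresult} then follows by checking that $B_1$ forces extra arcs in each of $H_1$, $H_2$, $H_3$ that create a hamiltonian cycle. If you want to complete your approach you would need, at minimum, an analogue of Lemma \ref{notmerge} quantifying the obstruction to merging two factor cycles, plus a full treatment of $2$-cycles in the factor and of small $a$; as it stands the argument has a genuine gap at its central step.
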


In Section 3, we shall prove Theorem \ref{mainresult} as a corollary of Theorem \ref{sharp}.

\begin{theorem}\label{sharp}
  Let $D$ be a strong balanced bipartite digraph on $2a$ vertices where $a\ge 2$. Suppose that, for every dominating pair of vertices $\{x, y\}$ either $d(x)\ge 2a-2, d(y)\ge a+1\  \mbox{or}\ d(y)\ge 2a-2, d(x)\ge a+1$. Then $D$ is either hamiltonian or isomorphic to one of the digraphs $H_1$, $H_2$ and $H_3$ (see Figure 1 ).
\end{theorem}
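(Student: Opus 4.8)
The plan is to argue by contradiction-with-exceptions: assume $D$ is not hamiltonian and show that $D$ must then be one of $H_1$, $H_2$, $H_3$. The first step is to produce a \emph{cycle factor}, i.e.\ a spanning collection of pairwise disjoint directed cycles. I would obtain this from the degree hypothesis via the bipartite-matching representation of $D$: a balanced bipartite digraph has a cycle factor if and only if the bipartite graph $B(D)$, with classes $\{v' : v \in V\}$ and $\{v'' : v \in V\}$ and an edge $u' w''$ for each arc $u \to w$, has a perfect matching. If no perfect matching exists, Hall's theorem supplies a set $S$ with $|N(S)| < |S|$; since $D$ is strong every vertex has an out-neighbour, so by pigeonhole two vertices of $S$ share a common out-neighbour, giving a dominating pair $\{x,y\}$. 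The degree bound then forces one of $x,y$ to have out-neighbourhood of size at least $a-1$, contradicting the cramping of out-neighbourhoods guaranteed by the Hall violation. Hence $D$ has a cycle factor $\mathcal{F} = C_1 \cup \cdots \cup C_t$; if $t = 1$ we are done, so assume $t \ge 2$ and choose $\mathcal{F}$ with $t$ minimum.

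Second, I would set up the merging machinery. Because $D$ is bipartite, every $C_i$ has even length and its vertices alternate between the two colour classes. The central observation is a \emph{merging lemma}: if two cycles $C_i$, $C_j$ of $\mathcal{F}$ admit a pair of crossing arcs of the correct parity, namely $x \to y'$ with $x \in C_i$, $y' \in C_j$ together with a suitably placed reverse arc $y \to x'$, then $C_i$ and $C_j$ can be spliced into a single cycle on $V(C_i) \cup V(C_j)$, contradicting the minimality of $t$. Thus, between any two cycles of $\mathcal{F}$ the arcs are severely restricted. To convert these restrictions into degree information I would extract dominating pairs: a vertex $z$ receiving arcs from two vertices $x, y$ lying on the cycles of $\mathcal{F}$ yields a dominating pair, whence either $d(x) \ge 2a-2$ or $d(y) \ge 2a-2$, with the partner having degree at least $a+1$.

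Third comes the exploitation of the degree bound. In a balanced bipartite digraph on $2a$ vertices a vertex has total degree at most $2a$, each vertex of the opposite class contributing at most a digon; so $d(x) \ge 2a-2$ means at most two of the $2a$ potential arcs at $x$ are absent, i.e.\ $x$ is joined by a digon to all but one or two vertices of the opposite class. Such a near-complete vertex can be inserted into, or rerouted across, essentially any cycle, which I would use to manufacture the missing crossing arcs and thereby merge cycles. Carrying this out shows that merging can fail only in a handful of tightly constrained configurations, in which the small number of absent arcs is pinned to specific positions; these configurations are exactly $H_1$, $H_2$, $H_3$, and one checks directly that each is strong, satisfies the hypothesis, and admits no hamiltonian cycle.

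The step I expect to be the main obstacle is the last one. Once the degree bound has localized all but two of the possible absent arcs, the argument becomes a delicate finite case analysis, precisely because the weakened bound $2a-2$ (rather than $2a-1$ as in the $B_1$ result of Theorem~\ref{mainresult}) leaves just enough slack for genuine non-hamiltonian examples to survive. The difficulty is to enumerate the possible placements of the absent arcs exhaustively and to show that each placement either produces a merging — hence a contradiction — or is isomorphic to one of the three exceptional digraphs; in particular one must verify that the parity constraints of the bipartition are respected throughout and that no further exceptional digraph has been overlooked.
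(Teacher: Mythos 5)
Your first step already fails: it is not true that every digraph satisfying the hypothesis has a cycle factor, and the contradiction you extract from Hall's theorem does not go through arithmetically. If $S\subset V_1$ violates Hall's condition, the pigeonhole argument does give a dominating pair $\{x_1,x_2\}\subset S$ with, say, $d(x_1)\ge 2a-2$; but since up to $a$ of those arcs can be in-arcs, this only yields $d^+(x_1)\ge a-2$, which is perfectly compatible with $|N^+(S)|=a-2$ and $|S|=a-1$. There is no contradiction; instead one is pushed into a tight extremal configuration, and working it out (as the paper does in its lemma on perfect matchings) shows that the configuration is realizable: it is exactly the exceptional digraph $H_2$, which satisfies the hypothesis, is strong, and has \emph{no} cycle factor because $N^+(\{x_1,x_2\})=\{y_1\}$. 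So ``hence $D$ has a cycle factor'' is false as stated, and any argument along your lines must carry $H_2$ as an exception from this very first step onward.

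Beyond that, your global strategy (minimal cycle factor plus pairwise merging) is genuinely different from the paper's, which instead takes a longest cycle $C$, proves the existence of a $C$-bypass, chooses one of minimum gap, and runs the case analysis on the gap and the length of the bypass. The paper does use your merging idea, but only in a preliminary lemma to handle the case where a minimal cycle factor consists entirely of $2$-cycles (so as to produce a cycle of length at least $4$); extending pairwise merging to factors whose cycles have unequal lengths is considerably harder, because two cycles that cannot be merged may still interact through path insertions, and because the degree hypothesis constrains only dominating pairs, which need not arise between two given cycles of the factor. Finally, the identification of $H_1$, $H_2$, $H_3$ --- which you correctly flag as the main obstacle --- is left entirely unexecuted, and that is where essentially all of the work in the paper's proof lies. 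As it stands the proposal is a plausible programme resting on a false first step rather than a proof.
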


 It is not difficult to see that none of the digraphs $H_1$, $H_2$ and $H_3$ contains a hamiltonian cycle. Here, in the digraph $H_2$, $x_3$ may dominate $y_1$ or not.

\unitlength 1mm 
\linethickness{0.4pt}
\ifx\plotpoint\undefined\newsavebox{\plotpoint}\fi 
\begin{picture}(167.25,60)(90,120)
\put(154.5,164.5){\circle*{2.5}}
\put(177.5,164.5){\circle*{2.5}}
\put(200.5,164.5){\circle*{2.5}}
\put(154.5,143.5){\circle*{2.5}}
\put(177.5,143.5){\circle*{2.5}}
\put(200.5,143.5){\circle*{2.5}}
\put(149.5,168.5){\makebox(0,0)[cc]{$x_1$}}
\put(173.5,168.5){\makebox(0,0)[cc]{$x_2$}}
\put(196.5,168.5){\makebox(0,0)[cc]{$x_3$}}
\put(149.5,140.5){\makebox(0,0)[cc]{$y_1$}}
\put(173.5,140.5){\makebox(0,0)[cc]{$y_2$}}
\put(196.5,140.5){\makebox(0,0)[cc]{$y_3$}}
\put(177.25,143.5){\vector(0,1){20.25}}
\put(155,163.75){\vector(-1,1){.07}}\multiput(177.75,143.75)(-.03836425,.033726813){593}{\line(-1,0){.03836425}}
\put(178.75,164.75){\vector(-1,1){.07}}\multiput(200.25,144)(-.03495935,.033739837){615}{\line(-1,0){.03495935}}
\put(155.75,165){\vector(-2,1){.07}}\multiput(200.5,144.25)(-.072764228,.033739837){615}{\line(-1,0){.072764228}}
\put(199,165){\vector(2,1){.07}}\multiput(154.5,143.5)(.069749216,.03369906){638}{\line(1,0){.069749216}}
\put(177.5,130){\makebox(0,0)[cc]{The digraph $H_2$.}}
\put(199.25,163.5){\vector(1,1){.07}}\put(178.75,144.25){\vector(-1,-1){.07}}\multiput(178.75,144.25)(.035901926,.033712785){571}{\line(1,0){.035901926}}
\put(200.75,163.5){\vector(0,1){.07}}\put(200.75,145){\vector(0,-1){.07}}\put(200.75,145){\line(0,1){18.5}}
\put(154.25,163){\vector(0,1){.07}}\put(154.25,145.5){\vector(0,-1){.07}}\put(154.25,145.5){\line(0,1){17.5}}
\put(175.5,163.75){\vector(1,1){.07}}\put(155,144.75){\vector(-1,-1){.07}}\multiput(155,144.75)(.036347518,.033687943){564}{\line(1,0){.036347518}}
\put(122,164.5){\circle*{2.5}}
\put(99,164.5){\circle*{2.5}}
\put(122,143.5){\circle*{2.5}}
\put(99,143.5){\circle*{2.5}}
\put(94.25,167.25){\makebox(0,0)[cc]{$x_1$}}
\put(94.25,145.25){\makebox(0,0)[cc]{$x_2$}}
\put(126,167.25){\makebox(0,0)[cc]{$y_1$}}
\put(126,145.25){\makebox(0,0)[cc]{$y_2$}}
\put(106,130){\makebox(0,0)[cc]{The digraph $H_1$}}
\put(119.25,164.5){\vector(1,0){.07}}\put(101.25,164.5){\vector(-1,0){.07}}\put(101.25,164.5){\line(1,0){18}}
\put(119.5,143.25){\vector(1,0){.07}}\put(100.75,143.25){\vector(-1,0){.07}}\put(100.75,143.25){\line(1,0){18.75}}
\put(102.5,145){\vector(-1,-1){.07}}\put(120.5,163){\vector(1,1){.07}}\multiput(120.5,163)(-.033707865,-.033707865){534}{\line(0,-1){.033707865}}
\end{picture}

\unitlength 1mm 
\linethickness{0.4pt}
\ifx\plotpoint\undefined\newsavebox{\plotpoint}\fi 
\begin{picture}(167.25,35)(70,90)
\put(118,122){\circle*{2}}
\put(118,108){\circle*{2}}
\put(142,122){\circle*{2}}
\put(142,108){\circle*{2}}
\put(97,115){\circle*{2}}
\put(163,115){\circle*{2}}
\put(140.25,121.5){\vector(2,1){.07}}\multiput(118,109.25)(.061126374,.033653846){364}{\line(1,0){.061126374}}
\put(116,122){\vector(3,1){.07}}\put(98,116){\vector(-3,-1){.07}}\multiput(98,116)(.1011236,.03370787){178}{\line(1,0){.1011236}}
\put(116,108.25){\vector(3,-1){.07}}\put(97.75,114){\vector(-3,1){.07}}\multiput(97.75,114)(.10672515,-.03362573){171}{\line(1,0){.10672515}}
\put(161,115.25){\vector(3,-1){.07}}\put(143.25,122.25){\vector(-3,1){.07}}\multiput(143.25,122.25)(.08533654,-.03365385){208}{\line(1,0){.08533654}}
\put(161.75,114){\vector(3,1){.07}}\put(143.5,108){\vector(-3,-1){.07}}\multiput(143.5,108)(.10252809,.03370787){178}{\line(1,0){.10252809}}
\put(145,125.25){\makebox(0,0)[cc]{$y_2$}}
\put(139.25,122.25){\vector(-1,0){19.5}}
\put(139.25,108.75){\vector(3,-2){.07}}\multiput(120.25,121.25)(.051212938,-.033692722){371}{\line(1,0){.051212938}}
\put(139,107.75){\vector(-1,0){18}}
\put(114.75,104.75){\makebox(0,0)[cc]{$x_1$}}
\put(115.5,127){\makebox(0,0)[cc]{$x_2$}}
\put(143.25,104.5){\makebox(0,0)[cc]{$y_1$}}
\put(167.25,115.5){\makebox(0,0)[cc]{$x_3$}}
\put(92,114.75){\makebox(0,0)[cc]{$y_3$}}
\put(128.75,95){\makebox(0,0)[cc]{Figure 1. The digraph $H_3$. }}
\end{picture}

It seems quite natural to ask whether a similar result to Conjecture \ref{conjecture} is true or not in balanced bipartite digraphs. In Section 4, we show that if a strong balanced bipartite digraph on $2a$ vertices such that $d(x)+d(y)\ge 3a$ for every pair of dominating and every pair of dominated vertices $\{x,y\}$, then $D$ must contain a cycle factor but may contain no hamiltonian cycle.

\section{Terminology and notation}

We shall assume that the reader is familiar with the standard terminology on
digraphs and refer the reader to \cite{bang} for terminology not defined here.
Let $D$ be a digraph with vertex set $V(D)$ and arc set $A(D)$.
For disjoint subsets $X$ and $Y$ of $V(D)$, $X\rightarrow Y$ means
that every vertex of $X$ dominates every
vertex of $Y$, $X\Rightarrow Y$ means that there is no arc from
$Y$ to $X$ and $X\mapsto Y$ means that both of $X\rightarrow Y$
and $X\Rightarrow Y$ hold.

For a vertex set $S\subset V(D)$, we denote by $N^+(S)$ the set of vertices in $V(D)$ dominated by the vertices of $S$; i.e. $N^+(S)=\{u\in V(D): vu\in A(D)\  \mbox{for some}\ v\in S\}.$
Similarly, $N^-(S)$ denotes the set of vertices of $V(D)$ dominating vertices of $S$; i.e.
 $N^-(S)=\{u\in V(D): uv\in A(D)\  \mbox{for some}\  v\in S\}.$
If $S=\{v\}$ is a single vertex, the cardinality of $N^+(v)$ (resp. $N^-(v)$), denoted by $d^+(v)$ (resp. $d^-(v)$) is called the {\it out-degree} (resp. {\it in-degree}) of $v$ in $D$. The degree of $v$ is $d(v)=d^+(v)+d^-(v)$.

Let $P=y_0y_1\ldots y_k$ be a path or a cycle of $D$. For $i\neq j$, $y_i, y_j\in V(P)$ we denote by $P[y_i, y_j]$ the {\it subpath} of $P$ from $y_i$ to $y_j$. If $0 < i\le k$ then the {\it predecessor } of $x_i$ on $P$ is the
vertex $x_{i-1}$ and is also denoted by $x^-_i$. If $0\le i<k$, then the {\it  successor} of $x_i$
on $P$ is the vertex $x_{i+1}$ and is also denoted by $x^+_i$. A digraph $D$ is said to be strongly connected or just strong, if for every pair $x, y$ of vertices of $D$, there is an $(x, y)$-path.

Let $C$ be a cycle in $D$. An $(x,y)$-path $P$ is a {\it $C$-bypass } if
$|V(P)|\ge 3$, $x\neq y$ and $V(P)\cap V(C)= \{x,y\}.$ The length of the path $C[x, y]$
is the gap of $P$ with respect to $C$. A {\it cycle factor} in $D$ is a collection of vertex-disjoint cycles $C_1, C_2, \ldots, C_t$ such that $V(C_1)\cup V(C_2)\cup \ldots \cup V(C_t)=V(D)$.

A digraph $D$ is {\it  bipartite} when $V(D)$ is a disjoint union of independent sets $V_1$ and $V_2$.  It is called {\it balanced} if $|V_1|=|V_2|$. A {\it matching} from $V_1$ to $V_2$ is an independent set of arcs with origin in $V_1$ and terminus in $V_2$ ($u_1u_2$ and $v_1v_2$ are independent arcs when $u_1\neq v_1$ and $u_2\neq v_2$). If $D$ is balanced, one says that such a matching is perfect if it consists of precisely $|V_1|$ arcs. A digraph $D$ is {\it semicomplete bipartite}, if the vertices of $D$
can be partitioned into two partite sets such that every partite set is
an independent set and for every pair $x, y$ of vertices from distinct
partite sets, $xy$ or $yx$ (or both) is in $D$.

\section{The main result}

\begin{theorem}\label{bifactor}\cite{gutin,haggkvis} A semicomplete bipartite digraph $D$ is hamiltonian if and only if it is strong and contains a cycle factor.
\end{theorem}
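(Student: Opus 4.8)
The plan is to first secure a cycle factor in $D$, and then, assuming $D$ is not hamiltonian, to run a merging argument on a cycle factor with the fewest cycles, the digraphs $H_1,H_2,H_3$ surfacing only in a tight endgame. For the cycle factor, observe that in a bipartite digraph a cycle factor is exactly the union of a perfect matching from $V_1$ to $V_2$ with one from $V_2$ to $V_1$, so by Hall's theorem it suffices to exclude a set $S$ lying in one part with $|N^+(S)|<|S|$. Since $D$ is strong, the out-neighbourhoods of the vertices of $S$ cannot be pairwise disjoint, so two of them share an out-neighbour and form a dominating pair; the bound $d(\cdot)\ge 2a-2$ then caps the deficiency and, using $N^+(V_1)=V_2$, forces the extremal case $|S|=a-1$ and $|N^+(S)|=a-2$. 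There the near-complete vertex of $S$ has $d^-=a$, so the two omitted vertices of the opposite part both dominate it, giving a second dominating pair whose degrees contradict the hypothesis once $a\ge 4$; the cases $a\in\{2,3\}$ are finite and dispatched directly. Hence $D$ has a cycle factor.

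Now assume $D$ is not hamiltonian and fix a cycle factor $\{C_1,\dots,C_t\}$ with $t\ge 2$ as small as possible. Writing $C_1=u_1u_2\cdots u_pu_1$ and $C_2=w_1w_2\cdots w_qw_1$, minimality means these two cycles cannot be merged: there are no indices $i,j$ with both $u_i\to w_{j+1}$ and $w_j\to u_{i+1}$, since such a pair would splice $C_1$ and $C_2$ into one cycle on $V(C_1)\cup V(C_2)$ and lower $t$. This non-merging condition, together with its analogue for every pair of cycles, is the combinatorial engine of the proof.

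To activate the degree hypothesis I produce dominating pairs across cycles. As $D$ is strong and $t\ge 2$, some arc leaves one cycle for another, say $u\to w$ with $u\in C_1$ and $w\in C_2$; then $u$ and the predecessor $w^-$ of $w$ on $C_2$ both dominate $w$, so $\{u,w^-\}$ is a dominating pair and one of them has degree at least $2a-2$, that is, is joined in both directions to all but at most two vertices of the opposite part. Such a near-complete vertex, say $u_i$ on $C_1$, dominates almost every $V_2$-vertex of $C_2$, so non-merging forces almost every $V_1$-vertex of $C_2$ to miss the successor $u_{i+1}$; iterating this over all cycles severely restricts their sizes and locates the at most two absent arcs at each near-complete vertex.

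The crux — and precisely where the relaxed bound $2a-2$ rather than the $2a-1$ of Theorem \ref{mainresult} is felt — is the tight endgame: once the degree inequalities hold with equality and both the number of leftover vertices and the set of missing arcs are small, merging is blocked and one must enumerate the surviving adjacency patterns by hand. I expect this enumeration, after enforcing strong connectivity and the dominating-pair condition, to reduce to exactly the three digraphs $H_1$, $H_2$ and $H_3$. Theorem \ref{mainresult} is then immediate: each of $H_1,H_2,H_3$ contains a dominating pair at which $d(x)\ge 2a-1$ fails, so a digraph satisfying condition $B_1$ cannot be isomorphic to any of them and must be hamiltonian.
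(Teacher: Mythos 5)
Your proposal does not prove the statement in question; it proves a different theorem. Theorem~\ref{bifactor} is the Gutin/H\"{a}ggkvist--Manoussakis criterion: for a \emph{semicomplete} bipartite digraph, hamiltonicity is equivalent to being strong and having a cycle factor. Its hypotheses contain no degree conditions at all, and in the nontrivial direction the cycle factor is \emph{given}, not something to be constructed. Your argument, by contrast, is built entirely on the dominating-pair degree condition ($d(x)\ge 2a-2$, $d(y)\ge a+1$), a Hall-type construction of a cycle factor from that condition, and the exceptional digraphs $H_1$, $H_2$, $H_3$. Those are the hypotheses and exceptional cases of the paper's Lemma~\ref{cyclefactor} and Theorem~\ref{sharp} (the paper's main result), not of Theorem~\ref{bifactor}. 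In other words, you have sketched the theorem that \emph{uses} this tool rather than the tool itself; note also that the paper does not prove Theorem~\ref{bifactor} but quotes it from \cite{gutin,haggkvis}, so it is a known external result, and any proof of it must work for arbitrary semicomplete bipartite digraphs, balanced or not, dense or not.

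Concretely, here is where your argument fails as a proof of the stated theorem: the ``only if'' direction is trivial, and in the ``if'' direction the only available resources are (i) a cycle factor $C_1,\dots,C_t$ with $t$ minimum, (ii) strong connectivity, and (iii) semicompleteness, i.e.\ the fact that \emph{every} pair of vertices in opposite partite sets is adjacent. Your non-merging observation (no indices $i,j$ with both $u_i\rightarrow w_{j+1}$ and $w_j\rightarrow u_{i+1}$) is the right kind of device --- it is essentially the paper's Lemma~\ref{notmerge} --- but to derive a contradiction from $t\ge 2$ one must play it against semicompleteness, which forces an abundance of arcs between any two cycles of the factor and (together with strongness) yields a merge. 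You never invoke semicompleteness; instead you ``activate the degree hypothesis,'' a hypothesis the theorem does not grant. Moreover, even read charitably as an attempt at Theorem~\ref{sharp}, the decisive step is deferred: ``I expect this enumeration \ldots to reduce to exactly the three digraphs $H_1$, $H_2$ and $H_3$'' is precisely the content of the paper's long case analysis (the $C$-bypass argument occupying Cases 1 and 2 of the proof of Theorem~\ref{sharp}), and expecting it is not proving it.
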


\begin{lemma}\label{cyclefactor} Let $D$ be a strong balanced bipartite digraph with partite sets $V_1$ and $V_2$ of cardinalities $a$ where $a\ge 2$.  Suppose that, for every dominating pair of vertices $\{x, y\}$, either $d(x)\ge 2a-2$ and $d(y)\ge a+1$ or $d(x)\ge a+1$ and $d(y)\ge 2a-2$  and suppose that $D$ is not isomorphic to the digraph $ H_2$.  Then $D$ contains a perfect matching from $V_1$ to $V_2$ and a perfect matching from $V_2$ to $V_1$. Moreover, $D$ contains a cycle factor.\end{lemma}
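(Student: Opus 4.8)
The plan is to produce the two perfect matchings first and read off the cycle factor for free. The point is that a perfect matching $M_1$ from $V_1$ to $V_2$ and a perfect matching $M_2$ from $V_2$ to $V_1$ are bijections on the two parts, so in the spanning subdigraph $M_1\cup M_2$ every vertex has in-degree and out-degree exactly $1$; such a subdigraph is a disjoint union of directed cycles covering $V(D)$, i.e. a cycle factor. Moreover the hypothesis is invariant under interchanging $V_1$ and $V_2$ (the family of dominating pairs, all degrees, and the condition ``$D\not\cong H_2$'' are all insensitive to which part carries which label), so it suffices to prove that a perfect matching from $V_1$ to $V_2$ exists; the matching from $V_2$ to $V_1$ then follows by the identical argument with the roles of the two parts interchanged.

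So I would argue by contradiction, assuming no perfect matching from $V_1$ to $V_2$ exists. By the K\"onig--Hall theorem there is a set $S\subseteq V_1$ with $|T|<|S|$, where $T=N^+(S)\cap V_2$. Since $D$ is strong every vertex has positive out-degree, so each vertex of $S$ has at least one out-neighbour, and every such out-neighbour lies in $T$; counting the arcs from $S$ to $T$, the bound $|S|>|T|$ forces some $z\in T$ to be dominated by two vertices $x,y\in S$. Thus $\{x,y\}$ is a dominating pair, and I may apply the hypothesis: after relabelling, $d(x)\ge 2a-2$ and $d(y)\ge a+1$. Using the bipartite caps $d^+(x)\le|T|$ and $d^-(x)\le a$ in $2a-2\le d(x)=d^+(x)+d^-(x)$, I get $|T|\ge a-2$; and strong connectivity rules out $S=V_1$ (which would force $T=V_2$, contradicting $|T|<|S|$), so $|S|\le a-1$. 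These squeeze the parameters to $|S|=a-1$, $|T|=a-2$, and they make $x$ \emph{full}, in the sense that $N^+(x)=T$ and $N^-(x)=V_2$.

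The crux is the next step. Writing $V_2\setminus T=\{w_1,w_2\}$ and letting $x_0$ be the unique vertex of $V_1\setminus S$, I note that no vertex of $S$ dominates $w_1$ or $w_2$, so their only possible in-neighbour in $V_1$ is $x_0$ and hence $d^-(w_1),d^-(w_2)\le 1$. On the other hand $N^-(x)=V_2$ gives $w_1\to x$ and $w_2\to x$, so $\{w_1,w_2\}$ is itself a dominating pair. The hypothesis then forces one of them, say $w_1$, to satisfy $d(w_1)\ge 2a-2$, whence $d^+(w_1)\ge 2a-3$; but $d^+(w_1)\le|V_1|=a$, so $a\le 3$. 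Since $a=2$ would give $|T|=0$ and thus a vertex of $S$ with out-degree $0$ (impossible in a strong digraph), I conclude $a=3$.

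It then remains to carry out the finite analysis for $a=3$, where $|S|=2$ and $|T|=1$. Here every inequality above is tight: both vertices of $S$ are full; strong connectivity forces the single vertex of $T$ to dominate $x_0$ (otherwise $S\cup T$ would have no out-arc leaving it); and the dominating pair $\{w_1,w_2\}$, together with $a+1=2a-2=4$, forces $d^+(w_1)=d^+(w_2)=3$, so $w_1,w_2$ dominate all of $V_1$. Checking that these are precisely the forced arcs, with only the arc from $x_0$ into $T$ left optional, identifies $D$ with $H_2$, contradicting the hypothesis. The main obstacle is thus not the matching machinery but this structural extraction --- making the degree condition, the bipartite degree caps, and strong connectivity cooperate so as to collapse the deficiency configuration either to an impossibility (for $a\ge 4$) or to the single exceptional digraph $H_2$ (for $a=3$); the cleanest device for the collapse is the $\{w_1,w_2\}$ argument above, which simultaneously bounds $a$ and seeds the final identification.
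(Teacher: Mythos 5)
Your proposal is correct and follows essentially the same route as the paper's proof: König--Hall, a pigeonhole-produced dominating pair inside the deficient set $S$ forcing $|N^+(S)|=a-2$ and $|S|=a-1$, then the dominating pair $\{w_1,w_2\}$ in $V_2\setminus N^+(S)$ collapsing everything to $a=3$ and the exceptional digraph $H_2$. The only differences are cosmetic (you bound $d^+(w_1)\ge 2a-3\le a$ where the paper bounds $d(w_1)\le 2a-|S|=a+1$, which is the same inequality), so there is nothing to add.
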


\begin{proof}  In order to prove that $D$ contains a perfect matching from $V_1$ to $V_2$ and a prefect matching from $V_2$ to $V_1$, by the K\"{o}nig-Hall theorem, it suffices to show that $|N^+(S)|\ge |S|$ for every $S\subset V_1$ and $|N^+(T)|\ge |T|$ for every $T\subset V_2$.

For a proof by contradiction, suppose that a non-empty set $S\subset V_1$ is such that $|N^+(S)|<|S|$. Then $V_2\setminus N^+(S)\neq\emptyset$. If $|S|=1$, write $S=\{x\}$, then $|N^+(S)|<|S|$ implies that $d^+(x)=0$. It is impossible in a strong digraph.  If $|S|=a$, then, for any $w\in V_2\setminus N^+(S)$,  the vertex $w$ is not dominated by any vertex of $D$ contradicting the fact that $D$ is strong. Thus, $2\le |S|\le a-1$, which implies $a\ge 3$ and $2a-2\ge a+1$ as well. Then $|N^+(S)|<|S|$ implies that there exist $x_1, x_2\in S$ and $y\in N^+(S)$ such that $\{x_1,x_2\} \rightarrow y$. Thus, $\{x_1, x_2\}$ is a dominating pair of vertices. By the hypothesis of this lemma, we assume, without loss of generality, that $d(x_1)\ge 2a-2$ and $d(x_2)\ge a+1$.  By
$$|N^+(S)|<|S|\le a-1,\eqno(3.1)$$
one gets $|N^+(S)|\le a-2$. Now we show that $|N^+(S)|=a-2$ and further $a=3$. To prove $|N^+(S)|=a-2$, it suffices to show that $|N^+(S)|\ge a-2$. Indeed, since there is no arc from $S$ to $V_2\setminus N^+(S)$, for any $w\in V_2\setminus N^+(S)$ and $x\in S$, $d(w)\le 2a-|S|\ \mbox{and} \ d(x)\le 2a-(a-|N^+(S)|).$
Thus,
$$2a-2\le d(x_1)\le 2a-(a-|N^+(S)|)=a+|N^+(S)|,\eqno(3.2)$$ that is, $|N^+(S)|\ge a-2$ and so $|N^+(S)|=a-2$, write $V_2\setminus N^+(S)=\{w_1, w_2\}$. It also follows that there must be equalities in all the estimates that led to (3.2).  In other words, $d(x_1)=2a-2$ and furthermore $x_1\rightarrow N^+(S)\rightarrow x_1$ and $\{w_1, w_2\}\mapsto x_1$. This, in turn, implies that $\{w_1, w_2\}$ is a dominating pair of vertices. By the hypothesis of this lemma, we assume, without loss of generality, that
$$d(w_1)\ge 2a-2\ \mbox{and}\  d(w_2)\ge a+1.\eqno(3.3)$$
 By (3.1) and $|N^+(S)|=a-2$, we can obtain that $|S|=a-1$. So, for any $w\in V_2\setminus N^+(S)$ and $x\in S$,
 $$d(w)\le a+1\ \mbox{and} \ d(x)\le 2a-2.\eqno(3.4)$$
 According to (3.3) and (3.4), we have that  $2a-2\le d(w_1)\le a+1$. So $a=3$. To convenience, write $V_1\setminus S=\{x_3\}$. By $d(x_2)\ge a+1=4$, (3.3) and (3.4), we get that $d(x_2)=d(w_1)=d(w_2)=4$. So  $y\rightarrow \{x_1, x_2\}\rightarrow y$, $x_3\rightarrow \{w_1, w_2\}\rightarrow x_3$ and $\{w_1, w_2\}\mapsto \{x_1, x_2\}$.  Since $D$ is strong, $y\rightarrow x_3$. Note that $D$ is isomorphic to the digraph $H_2$, contrary to our assumption.

 This completes the proof of existence of a perfect matching from $V_1$ to $V_2$. The proof for a perfect matching in the opposite direction is analogous. Observe that $D$ contains a cycle factor if and only if there exist both a perfect matching from $V_1$ to $V_2$ and a perfect matching from $V_2$ to $V_1$. Hence $D$ contains a cycle factor.
\end{proof}

\begin{lemma}\label{notmerge}
  Let $D$ be a strong bipartite digraph with partite sets $V_1$ and $V_2$. Let $C=x_0y_0x_1y_1\ldots x_{m-1}y_{m-1}x_0$ be a longest cycle in $D$, and let $uv$ be an arc in $D\setminus V(C)$, where $u$ and $x_i$ belong to the same partite set for $i\in \{0, 1,\ldots, m-1\}$. Then for every pair of vertices $y_i, x_{i+1}$ from $V(C)$ at most one of the arcs $y_iu$ and $vx_{i+1}$ belongs to $A(D)$. Furthermore, $d^-_{V(C)}(u)+d^+_{V(C)}(v)\le m$.
\end{lemma}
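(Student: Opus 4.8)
The plan is to exploit the maximality of $C$ through an insertion argument for the first claim, and then to sum the resulting local restrictions around the cycle to obtain the degree bound.

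First I would fix notation. Since $C$ alternates between the two partite sets and $u$ lies in the same partite set as the vertices $x_i$, we may take $u\in V_1$; as $uv\in A(D)$ and $D$ is bipartite, the terminus $v$ lies in $V_2$, the partite set containing the vertices $y_i$. In particular $u\neq v$, and $u,v\notin V(C)$ by hypothesis. Note also that for each index $i$ (taken modulo $m$) the arc $y_i\to x_{i+1}$ lies on $C$.

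For the first part, I would fix a pair $y_i,x_{i+1}$ and argue by contradiction: suppose both $y_iu$ and $vx_{i+1}$ are arcs of $D$. Then $y_i\to u\to v\to x_{i+1}$ is a path on the four \emph{distinct} vertices $y_i,u,v,x_{i+1}$ whose interior is disjoint from $C$. Replacing the arc $y_ix_{i+1}$ of $C$ by this path yields a cycle through $V(C)\cup\{u,v\}$, that is, a cycle on $2m+2>2m$ vertices, contradicting the choice of $C$ as a longest cycle. Hence at most one of $y_iu$ and $vx_{i+1}$ belongs to $A(D)$.

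For the ``furthermore'' part, I would count over the $m$ consecutive pairs. Because $V_1$ is independent, every in-neighbour of $u$ lying on $C$ is some $y_j$, so $d^-_{V(C)}(u)=|\{i:y_iu\in A(D)\}|$; likewise $d^+_{V(C)}(v)=|\{i:vx_{i+1}\in A(D)\}|$, since as $i$ runs over $0,\dots,m-1$ the vertex $x_{i+1}$ runs over all of $x_0,\dots,x_{m-1}$. Summing the pointwise bound from the first part over $i=0,\dots,m-1$ then gives exactly $d^-_{V(C)}(u)+d^+_{V(C)}(v)\le m$. The argument is short, and the only genuine step is the rerouting; the main thing to be careful about is confirming that $y_i,u,v,x_{i+1}$ are pairwise distinct so that the inserted path is a genuine path and the enlarged closed walk is a cycle, which is immediate from $u,v\notin V(C)$, $u\neq v$, and $y_i\neq x_{i+1}$. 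Notably, this lemma uses only the maximality of $C$ (together with strong connectivity to guarantee a cycle exists) and none of the degree conditions $B_k$.
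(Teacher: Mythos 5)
Your proposal is correct and follows essentially the same route as the paper: both reroute the cycle through the path $y_iuvx_{i+1}$ in place of the arc $y_ix_{i+1}$ to contradict maximality, and then sum the resulting restriction over the $m$ pairs $(y_i,x_{i+1})$, using bipartiteness to identify $d^-_{V(C)}(u)$ and $d^+_{V(C)}(v)$ with counts over these pairs. Your write-up is if anything slightly more careful about the distinctness of the four vertices involved.
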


\begin{proof}
  If there exist $y_i, x_{i+1}\in V(C)$ such that $y_i\rightarrow u$ and $v\rightarrow x_{i+1}$, then $C$ and the arc $uv$ can be merged into a longer cycle than $C$ by deleting the arc $y_ix_{i+1}$ and adding the arcs $y_iu$ and $vx_{i+1}$. This would contradict the fact that $C$ is a longest cycle in $D$, so at most one of the arcs $y_iu$ and $vx_{i+1}$ belongs to $A(D)$. There is precisely $m$ of such pairs. By accounting for the arcs $y_iu$ and $vx_{i+1}$, we get the required estimate $d^-_{V(C)}(u)+d^+_{V(C)}(v)\le m$.
\end{proof}

\begin{lemma}\label{4cycle} Let $D$ be a strong balanced bipartite digraph with partite sets $V_1$ and $V_2$ of cardinalities $a$ where $a\ge 3$. Suppose that, for every dominating pair of vertices $\{x, y\}$, either $d(x)\ge 2a-2$ and $d(y)\ge a+1$ or $d(x)\ge a+1$ and $d(y)\ge 2a-2$. Then $D$ contains a cycle of length at least $4$.\end{lemma}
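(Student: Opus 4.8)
The plan is to argue by contradiction: suppose $D$ has no cycle of length at least $4$. Since $D$ is bipartite every cycle has even length, and since $D$ is strong it contains at least one cycle; hence under this assumption \emph{every} cycle of $D$ is a $2$-cycle (a digon). The whole argument is a structural reduction showing that such a $D$ cannot meet the degree hypothesis.

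The first step is to show that the assumption forces $D$ to be symmetric, i.e.\ $v\to u$ whenever $u\to v$. In a strong digraph every arc lies on a cycle, since an arc $u\to v$ together with any $v$--$u$ path (which exists by strong connectivity) closes up into a cycle through that arc. As all cycles are digons, this cycle must be $u\to v\to u$, so the reverse arc $v\to u$ is present. Let $G$ be the underlying undirected graph, with one edge per digon; then $G$ is connected (because $D$ is strong) and bipartite with parts $V_1,V_2$, and crucially $d_D(v)=2\deg_G(v)$ for every vertex $v$. Next I would observe that $G$ is in fact a tree: any cycle of $G$ has length at least $4$ by bipartiteness, and traversing it in $D$ (possible by symmetry) produces a directed cycle of that same length $\ge 4$, a contradiction. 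Hence $G$ is acyclic and connected, i.e.\ a tree on $2a\ge 6$ vertices.

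The final step exploits a leaf. The tree $G$ has a leaf $\ell$, whose unique neighbor $u$ must satisfy $\deg_G(u)\ge 2$, for otherwise $\{\ell,u\}$ would be an isolated edge, impossible in a connected tree on more than two vertices. Choose any $w\in N_G(u)\setminus\{\ell\}$. Then in $D$ we have both $\ell\to u$ and $w\to u$, so $\{\ell,w\}$ is a dominating pair. However $d_D(\ell)=2\deg_G(\ell)=2$, whereas in either alternative of the hypothesis the vertex $\ell$ is required to have degree at least $\min\{2a-2,\,a+1\}=a+1\ge 4$, using $a\ge 3$. This contradiction shows that no such cycle-free-of-length-$\ge 4$ digraph can satisfy the hypothesis, so $D$ must contain a cycle of length at least $4$.

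The only genuinely substantive step is the structural reduction—realizing that the absence of a long cycle collapses $D$ all the way to a symmetric tree; once that is in hand the leaf argument is immediate. It is also precisely here that the restriction $a\ge 3$ is indispensable: when $a=2$ one has $\min\{2a-2,a+1\}=2$, and the path $P_4$ read as a symmetric digraph satisfies the degree condition yet has no cycle of length $4$, so the leaf at degree $2$ no longer yields a contradiction.
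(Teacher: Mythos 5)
Your proof is correct, and it takes a genuinely different and more elementary route than the paper. The paper first disposes of the exceptional digraph $H_2$ (which contains a $4$-cycle), then invokes Lemma~\ref{cyclefactor} (via K\"onig--Hall) to produce a cycle factor, takes a minimal one so that all its cycles are $2$-cycles $x_iy_ix_i$, and uses the merging bound of Lemma~\ref{notmerge} to force $a=3$ in two separate cases, in each of which it exhibits an explicit hamiltonian (hence long) cycle. You instead observe that the absence of any cycle of length at least $4$ already collapses the global structure: every arc of a strong digraph lies on a cycle, so all arcs must be reversible and $D$ is symmetric; the underlying graph is then connected and, being bipartite with girth at least $4$, must be acyclic, i.e.\ a tree on $2a\ge 6$ vertices; a leaf $\ell$ together with a second neighbour of its support vertex forms a dominating pair, yet $d(\ell)=2<a+1\le\min\{2a-2,a+1\}$. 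Each step checks out (in particular, the arc $u\to v$ concatenated with a $(v,u)$-path is indeed a cycle through that arc, forcing the digon). Your argument is shorter, avoids the cycle-factor machinery and Lemma~\ref{notmerge} entirely, and cleanly isolates where $a\ge 3$ is needed --- your $a=2$ counterexample is exactly the digraph $H_1$ of the paper. What the paper's approach buys in exchange is the reuse of Lemmas~\ref{cyclefactor} and \ref{notmerge}, which it needs anyway for the main theorem; your proof makes Lemma~\ref{4cycle} logically independent of them.
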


\begin{proof}  If $D$ is isomorphic to the digraph $H_2$, then obviously $D$ contains a cycle of length 4. If not, then suppose, on the contrary, that $D$ contains no cycle of length more than or equal to 4. So $D$ is non-hamiltonian as $a\ge 3$. By Lemma \ref{cyclefactor}, $D$ contains a cycle factor. Let $C_1, C_2, \ldots, C_t$ be a minimal cycle factor. Then the length of every $C_i$ is 2 and $t=a$. Write $C_i=x_iy_ix_i$, where $x_i\in V_1$ and $y_i\in V_2$ for $i=1,2,\ldots,a$.  By Lemma \ref{notmerge}, $d_{V(C_j)}(x_i)+d_{V(C_j)}(y_i)\le 2$, for every $i\in \{1,2,\ldots, a\}$ and $j\in \{1, 2,\ldots, a\}\setminus \{i\}$. Thus $d(x_i)+d(y_i)\le 2(a-1)+4=2a+2$.

If there exists a vertex $y_j\in V_2\setminus \{y_i\}$ such that between $x_i$ and $y_j$ form a 2-cycle. Without loss of generality, assume that  $i=1$ and $j=2$. Note that  $\{x_1,x_2\}$ and $\{y_1,y_2\}$ are both dominating pairs of vertices. Thus, by assumption
  \begin{align*}
2(3a-1) &  \le d(x_1)+d(x_2)+d(y_1)+d(y_2)\\
        &  =   d(x_1)+d(y_1)+d(x_2)+d(y_2)\\
        &  \le 2(2a+2),\end{align*}
which implies that $a\le 3$. By the hypothesis of this lemma, $a\ge 3$ and so $a=3$.  Further, there must be equalities in all the estimates, i.e. $d(x_i)=d(y_i)=4$, for $i=1,2$. By Lemma \ref{notmerge}, $x_2$ and $y_1$ are not adjacent, so $y_1\rightarrow x_3\rightarrow y_1$ and $y_3\rightarrow x_2\rightarrow y_3$. However, $x_1y_1x_3y_3x_2y_2x_1$ is a hamiltonian cycle, a contradiction.

Now assume that there exist no such vertices.  Combining this with Lemma \ref{notmerge}, for any two distinct cycles $C_i$ and $C_j$, we have that $C_i\Rightarrow C_j$ or $C_j\Rightarrow C_i$. Since $D$ is strong, $t\ge 3$. Without loss of generality, assume that $C_3\Rightarrow C_1$ and $C_1\Rightarrow C_2$ and further assume that $x_1\rightarrow y_2$.  Note $\{x_1, x_2\}$ is a dominating pair of vertices. By the hypothesis of this lemma, $d(x_1)\ge 2a-2$, $d(x_2)\ge a+1$ or $d(x_2)\ge 2a-2$, $d(x_1)\ge a+1$. Observe that for every $i\in \{1,2,\ldots, t\}$, $d(x_i)\le a+1$ and $d(y_i)\le a+1$. Thus, $2a-2\le a+1$, i.e. $a\le 3$ and so $a=3$. From this, it is not difficult to get that for any dominating pair vertices $\{u, v\}$, $d(u)=d(v)=4$. So, $d(x_1)=d(x_2)=4$, which implies that $y_3\rightarrow x_1$. This, in turn, implies that $\{y_1, y_3\}$ is a dominating pair. Thus, $d(y_1)=d(y_3)=4$ and further $y_1\rightarrow x_2$ and $x_3\rightarrow y_1$.  Since $D$ is strong, it must be $x_2\rightarrow y_3$. However $y_3x_3y_1x_1y_2x_2y_3$ is a hamiltonian cycle. a contradiction.
\end{proof}

{\bf\noindent The proof of Theorem \ref{sharp}}.

\begin{proof} Let $V_1$ and $V_2$ denote the partite sets of $D$. By assumption of this theorem, a dominating pair $\{x, y\}$ means $d(x)\ge 2a-2$, $d(y)\ge a+1$, or  $d(y)\ge 2a-2$, $d(x)\ge a+1$. We will implicity use this in the remainder of the paper.
Suppose $a=2$. Denote $V_1=\{x_1, x_2\}$ and $V_2=\{y_1, y_2\}$. Since $D$ is strong, the in-degree of every vertex is at least one. If the in-degree of every vertex is one in $D$, then clearly $D$ contains a hamiltonian cycle. Now assume that there exists $z\in V(D)$ such that $d^-(z)\ge 2$, it is to say that there exist dominating pair of vertices in $D$. Without loss of generality, assume that $\{x_1, x_2\}$ is a dominating pair and furthermore $d(x_1)\ge 2a-2=2$ and $d(x_2)\ge a+1=3$. This  means that $x_2$ and every vertex of $V_2$ are adjacent. If $x_1$ and every vertex of $V_2$ are adjacent, then $D$ is  a semicomplete bipartite digraph. Using Theorem \ref{bifactor} and Lemma \ref{cyclefactor}, $D$ is hamiltonian. If $x_1$ and one of $y_1$ and $y_2$, say $y_2$, are not adjacent, then by $d(x_1)\ge 2$, we have that $y_1x_1y_1$ is a 2-cycle. Since $D$ is strong, we can deduce that  $x_2\rightarrow \{y_1, y_2\}\rightarrow x_2$. Note that $D$ is isomorphic to the digraph $H_1$.

Now suppose that $a\ge 3$. In this case, $2a-2\ge a+1$ and equality holds only if $a=3$. Suppose that $D$ is non-hamiltonian. Let $C=x_0y_0x_1y_1\ldots$ $x_{m-1}y_{m-1}x_0$ be a longest cycle in $D$, where $x_i\in V_1$ and $y_i\in V_2$ for $i\in\{0,1,\ldots, m-1\}$. Lemma \ref{4cycle} implies that $m\ge 2$. From now on, all subscripts appearing in this proof are taken modulo $m$.

We first show that $D$ contains a $C$-bypass. Assume $D$ does not have one. Since $D$ is strong, it must contain a cycle $Z$ such that $|V(Z)\cap V(C)|=1$.  Without loss of generality, assume that $V(Z)\cap V(C)=\{x_0\}$. Let $z$ be the predecessor of $x_0$ on $Z$. Since $\{z, y_{m-1}\}$ is a dominating pair of vertices, we have $d(z)\ge 2a-2$ and $d(y_{m-1})\ge a+1$ or $d(z)\ge a+1$ and $d(y_{m-1})\ge 2a-2$. Since $D$ contains no $C$-bypass, we have $d_{V(C)\setminus \{x_0\}}(z)=0$ and $d_{V(Z)\setminus \{x_0\}}(y_{m-1})=0$.  Note that $|(\{z, y_{m-1}\}, x)\cup (x, \{z, y_{m-1}\})|\le 2$ for every $x\in V_1\setminus (V(C)\cup V(Z))$. Denote
$|V_1\cap (V(Z)\setminus \{x_0\})|=p$. Hence $d(z)+d(y_{m-1})\le 2(a-p-m)+2p+2+2m=2a+2$.
This follows $3a-1\le d(y_{m-1})+d(z)\le 2a+2$.  Using this inequalities with $a\ge 3$, we obtain that $a=3$. In addition, the above inequalities become equalities, which implies that $d(z)=4$. Clearly, $m=2$, write $V_1\cap (V(D)\setminus V(C))=\{x\}$. By $d_{V(C)\setminus \{x_0\}}(z)=0$ and $d(z)=4$, we have $zxz$ is a 2-cycle. Since $D$ contains no $C$-bypass, we have $d_{V(C)}(x)=0$, that is $d(x)=2$. However, $\{x, x_0\}\rightarrow z$ implies that $\{x, x_0\}$ is a dominating pair of vertices. By assumption, $d(x)\ge a+1=4$, a contradiction. Therefore, $D$ contains a $C$-bypass.

Let $P=u_1u_2\ldots u_s$ be a $C$-bypass $(s\ge 3)$. Suppose also that the gap of $P$ is minimum among the gaps of all $C$-bypass. Denote $C'=C[u_1^+, u_s^-]$, where $u^+_1$ is the successor of $u_1$ on $C$ and $u^-_s$ is the predecessor of $u_s$ on $C$. Since $C$ is a longest cycle of $D$, we have that $|V(C')|\ge s-2$. Because $D$ is a bipartite digraph, when $s$ is odd, $u_1$ and $u_s$ belong to the same partite set; when $s$ is even, $u_1$ and $u_s$ belong to distinct partite sets.  Denote $R=V(D)\setminus (V(C)\cup V(P))$.  Noting that $\{u_{s-1}, u_s^-\}$ is a dominating pair of vertices, by assumption, $$d(u_{s-1})\ge 2a-2, d(u_s^-)\ge a+1 \ \mbox{or}\ d(u_{s-1})\ge a+1, d(u_s^-)\ge 2a-2.\eqno(3.5)$$

The following two claims will be very useful in the remaining proof.

\vskip 0.3cm
{\noindent\bf Claim 1.} For any $x\in R$, $|(\{u_{s-1}, u_s^-\}, x)\cup (x, \{u_{s-1}, u_s^-\})|\le 2$.

\begin{proof} Clearly, $u_{s-1}$ and $u^-_s$ belong to the same partite set.  It suffices to prove the case when $x$ and $u_{s-1}$ belong to distinct partite sets.  If $u_{s-1}\rightarrow x$, then $x\nrightarrow u^-_s$, for otherwise
$P[u_1, u_{s-1}]xu^-_s$ is also a $C$-bypass, which gap is strictly less than $P$, a contradiction.  If $x\rightarrow u_{s-1}$, then $u^-_s\nrightarrow x$, for otherwise $u^-_sxu_{s-1}C[u_s, u^-_s]$ is a longer cycle than $C$, a contradiction. Hence the claim holds.   \end{proof}

{\noindent\bf Claim 2.} Every vertex on $C'$ is not adjacent to any vertex on $P[u_2, u_{s-1}]$.

\begin{proof} Since  $P$ has the minimum gap, the claim is obvious. \end{proof}

Now we divide the proof into two cases to consider.

\vskip0.2cm
 {\noindent\bf Case 1. }  $|V(C')|\ge 2$.
 \vskip0.2cm

To complete the proof, we will first  give the following useful observation.
 \vskip0.2cm
{\noindent\bf Observation 1.}  For any $x\in V(D)\setminus V(C)$, $d_{V(C)}(x)\le m$.

\begin{proof}
 Assume, without loss of generality, that $x\in V_1$. Let $d^-_{V(C)}(x)=t$ and denote $N^-_{V(C)}(x)=\{y_{i_1}, y_{i_2}, \ldots, y_{i_t}\}$. According to $|V(C')|\ge 2$ and the fact that $P$ has the minimum gap, $x\nrightarrow y_{i_j+1}$ for every $j\in \{1,2,\ldots, t\}$. Hence, $d_{V(C)}(x)=d^-_{V(C)}(x)+d^+_{V(C)}(x)\le t+(m-t)=m$.
\end{proof}

We may assume, without loss of generality, that $d(u^-_s)\ge 2a-2$ and $d(u_{s-1})\ge a+1$.  In fact, let $x\in V(D)\setminus V(C)$ be any. By Observation 1, $d_{V(C)}(x)\le m$. So $d(x)\le m+2(a-m)=2a-m$. If $m\ge 3$, then $d(x)\le 2a-3$, in particular, $d(u_{s-1})\le 2a-3$. Combining this with (3.5), we have that $d(u^-_s)\ge 2a-2$ and $d(u_{s-1})\ge a+1$. If $m=2$, then $|V(C')|\ge 2$ implies that $|V(C')|=s-2=2$. By symmetry, we may assume, without loss of generality, that $d(u^-_s)\ge 2a-2$ and $d(u_{s-1})\ge a+1$.

If $s\ge 6$, then, by Claim 2,  $d(u_s^-)\le 2a-4$, a contradiction to $d(u^-_s)\ge 2a-2$. So, we assume from now on that $s\le 5$.  If $s\ge 4$, then,  by Claim 2, we have that $d(u^-_s)\le 2a-2$ and so $d(u^-_s)=2a-2$.

Suppose $s=5$.   In this case, $u_1$ and $u_s$ belong to the same partite set. Now,  without loss of generality, assume that $u_1=x_0$ and  $u_s=x_r$. Clearly, $R\cap V_1\neq\emptyset$.  By $|V(C')|\ge s-2$, we have that $r\ge 2$. By the above argument, $d(y_{r-1})=2a-2$ and $y_{r-1}$ and every vertex of $V_1\setminus \{u_3\}$ form a 2-cycle,  in particular, for any $x\in R\cap V_1$,  $x$ and $y_{r-1}$ form a 2-cycle. By Claim 1, $u_4$ and $x$ are not adjacent. Combining this with Observation 1, we have $d(x)\le m+2(a-m-1)=2a-m-2\le 2a-4$.  Note that  $\{x_{r-1}, x\}\rightarrow y_{r-1}$, i.e. $\{x, x_{r-1}\}$ is a dominating pair. Using Claim 2,  we can obtain that $d(x_{r-1})\le 2a-4$,  contradicting the fact that $\{x, x_{r-1}\}$ is a dominating pair.

Suppose  $s=4$.  In this case,  $|V(C')|$ is even and $u_1$ and $u_s$ belong to distinct partite sets. Now assume, without loss of generality, that $u_1=x_0$ and $u_s=y_r$.  Obviously, $r\ge 1$.   By the above argument,  $d(x_r)=2a-2$ and $x_r$ and every vertex of $V_2\setminus \{u_2\}$ form a  2-cycle. By Claim 1, $u_3$ and every vertex of $R\cap V_2$ are not adjacent.
 This together with Observation 1 implies that  $d(u_3)\le m+2$.  Combining this with $d(u_3)\ge a+1$, we have $a=m+1$. Thus, $R=\emptyset$ and $d_{V(C)}(u_3)=m$.

 Assume that $r=1$.  Note that  $\{y_0, y_1\}$ is a dominating pair. Since $2a-2\ge a+1$, by assumption, $d(y_0)\ge a+1$. For any $i\in \{2, \ldots, m-1\}$, if $x_i\rightarrow y_0$, then $x_iy_0x_1C[y_{i+1}, x_0]u_2u_3C[y_1, x_i]$ is a hamiltonian cycle, a contradiction; if $y_0\rightarrow x_i$, then $y_{i-1}x_1y_0C[x_i, x_0]u_2u_3C[y_1, y_{i-1}]$ is a hamiltonian cycle, a contradiction. Thus $y_0$ and $x_i$ are not adjacent. Furthermore, $y_0\nrightarrow x_0$, for else $u_2u_3C[y_1, y_{m-1}]x_1y_0x_0u_2$ is a hamiltonian cycle, a contradiction. From this we have that $d(y_0)\le 3$. However, $a+1\le d(y_0)\le 3$ implies that $a\le 2$, a contradiction.

 Assume that $r\ge 2$, i.e. $|V(C')|\ge 4$.  Denote $d^-_{V(C)}(u_3)=t$.  If $t=0$, then, by Claim 2, $d_{V(C)}(u_3)\le m-2$, a contradiction to $d_{V(C)}(u_3)=m$. Next assume $t\ge 1$.
  Since $P$ has the minimum gap with respect to $C$, if $y_i\rightarrow u_3$, then $u_3\nrightarrow y_{i+1}$ and $u_3\nrightarrow y_{i+2}$. Hence $d_{V(C)}(u_3)=d_{V(C)}^-(u_3)+d_{V(C)}^+(u_3)\le t+(m-2t)=m-t\le m-1$, a contradiction.

Suppose  $s=3$.  In this case, $u_1$ and $u_s$ belong to the same partite set. Now assume that $u_1=x_0$ and $u_s=x_r$. Clearly, $R\cap V_1\neq \emptyset$. Since $|V(C')|\ge 2$,  we have $r\ge 2$, which means that $m\ge 3$. Let $x\in V(D)\setminus V(C)$ be any. By Observation 1, $d_{V(C)}(x)\le m$. So $d(x)\le m+2(a-m)=2a-m\le 2a-3$. This means that any pair of vertices in $V(D)\setminus V(C)$ cannot form a dominating pair. Hence $d^-_{V(D)\setminus V(C)}(x)\le 1$ and so $d(x)\le m+a-m+1=a+1$, which, in turn, implies that if $x$ and some vertex of $V(D)$ form a dominating pair, then $d(x)=a+1$ and further $d^-_{V(D)\setminus V(C)}(x)=1$ and $d^+_{V(D)\setminus V(C)}(x)=a-m$. So $d(u_2)=a+1$. Furthermore, $d^-_R(u_2)=1$ and $d^+_R(u_2)=a-m$. So there exists $x'\in R\cap V_1$ such that $u_2$ and $x'$ form a 2-cycle. Then $\{x',x_0\}\rightarrow u_2$ implies that $\{x',x_0\}$ is a dominating pair. So $d(x')=a+1$.
Now we show that $d_{V(C)}(u_2)+d_{V(C)}(x')\le 2(m-1)$. In fact, since $C$ is a longest cycle and $P$ has the minimum gap with respect to $C$, we can obtain that, for any $x_i\in N^-(u_2)\cap V(C)$, $x'\nrightarrow y_i$ and $x'\nrightarrow y_{i+1}$. Denote $d^-_{V(C)}(u_2)=t$. Clearly, $t\ge 1$. Then, $d_{V(C)}^-(u_2)+d_{V(C)}^+(x')\le t+(m-2t)=m-t\le m-1$. Similarly, for any $x_i\in N^+(u_2)\cap V(C)$, $y_{i-2}\nrightarrow x'$ and $ y_{i-1}\nrightarrow x'$. Hence $d_{V(C)}^+(u_2)+d_{V(C)}^-(x')\le m-1$. Add these two inequalities, we obtain $d_{V(C)}(u_2)+d_{V(C)}(x')\le 2(m-1)$, which implies the desired inequality. However,  $2(a+1)=d(u_2)+d(x')\le 2(m-1)+2(a-m+1)=2a$ is  a contradiction.

\vskip0.2cm
 {\noindent\bf Case 2. }  $|V(C')|=1$.
 \vskip0.2cm

 In this case, $s=3$  and $u_1$ and $u_s$ belong to the same partite set. Assume that  $u_1=x_0$ and $u_s=x_1$. By symmetry and (3.5), without loss of generality, assume that $d(u_2)\ge 2a-2$ and $d(y_0)\ge a+1$. Clearly, $R\cap V_1\neq \emptyset$.

 \vskip0.2cm
 {\noindent\bf Subcase 2.1.}  There exists $u\in R\cap V_1$ such that $u_2$ and $u$ are not adjacent.
 \vskip0.2cm

 First $d(u_2)\ge 2a-2$ and the hypothesis of the subcase imply  $d(u_2)=2a-2$. Moreover,  $u_2$ and every vertex of $V_1\setminus \{u\}$ form a 2-cycle, which implies that every pair of vertices of $V_1\setminus \{u\}$ form a dominating pair. Thus, the degree of every vertex in  $V_1\setminus \{u\}$ is greater or equal to $a+1$.

  If $a=3$, then $2a-2=a+1=4$. Because $D$ is strong, there exists $y_i\in V(C)\cap V_2$ such that $u\rightarrow y_i$.  Then $\{x_i,u\}\rightarrow y_i$ means that  $\{x_i, u\}$ is a dominating pair and so $ d(u)\ge a+1=4$.   Since  $u_2$ and $u$ are not adjacent,  we obtain that  $\{y_0, y_1\}\rightarrow u\rightarrow \{y_0,y_1\}$.  If $y_0\rightarrow x_0$, then $y_0x_0u_2x_1y_1uy_0$ is a hamiltonian cycle, a contradiction. Hence $x_0\mapsto y_0$. Similarly, we can obtain that $y_0\mapsto x_1$, $x_1\mapsto y_1$ and $y_1\mapsto x_0$. Note that $D$ is isomorphic to the digraph $H_3$.

  Now assume $a\ge 4$.   Suppose $|R\cap V_1|=1$.  We first show that $d(u)\le m+1$. Indeed, if there exist $y_i, y_{i+1}\in V(C)$ such that $y_i\rightarrow u$ and $u\rightarrow y_{i+1}$, then $d(x_{i+1})\le  2a-3$. For $j\in \{0, 1, \ldots, m-1\}\setminus \{i+1\}$,  $x_{i+1}\nrightarrow y_j$ otherwise, $x_{i+1}C[y_j, y_i]uC[y_{i+1}, x_j]u_2x_{i+1}$ is a hamiltonian cycle, a contradiction.  In addition, $y_{i+1}\nrightarrow x_{i+1}$, for else $y_{i+1}x_{i+1}u_2C[x_{i+2}, y_i]uy_{i+1}$ is a hamiltonian cycle, a contradiction.  Hence $d(x_{i+1})\le 2a-m\le 2m-3$.  Therefore, there  exists at most one pair of $\{y_i, y_{i+1}\}$ such that $y_i\rightarrow u\rightarrow y_{i+1}$, which implies  $d_{V(C)}(u)\le m+1$.
Because $D$ is strong, there exists $y_i\in V(C)\cap V_2$ such that $u\rightarrow y_i$.  Then $\{x_i,u\}\rightarrow y_i$ means that  $\{x_i, u\}$ is a dominating pair and so $ d(u)\ge a+1$ as $2a-2\ge a+1$, contrary to  $d(u)\le m+1$.

Suppose  $|R\cap V_1|\ge 2$. First we claim that  $a=2m+1$ and the degree of every vertex of $(R\cap V_1)\setminus \{u\}$ is equal to $a+1$.  In fact,  let $x\in (R\cap V_1)\setminus \{u\}$ be any. Similar to  Claim 1, we can deduce that $y_i$ and $x$ are not adjacent, for every $y_i\in V(C)\cap V_2$.  Recalling that $d(x)\ge a+1$,  we have  $a+1\le d(x)\le 2(a-m)$, i.e. $a\ge 2m+1$. Recalling that $d(y_0)\ge a+1$, we have $a+1\le d(y_0)\le 2m+2$, i.e. $a\le 2m+1$.  From these,  we get that  $a=2m+1$ and equality hold everywhere. So $d(x)=a+1$.
Then $a=2m+1$ implies that $a$ is odd and  further $|R\cap V_1|\ge 3$. This is a contradiction to the fact that every pair of vertices of $(R\cap V_1)\setminus \{u\}$ form a dominating pair and  the degree of every vertex in  $(R\cap V_1)\setminus \{u\}$ is equal to $a+1$.

 \vskip0.2cm
 {\noindent\bf Subcase 2.2. }  $u_2$ and every vertex of  $R\cap V_1$ are adjacent.
 \vskip0.2cm
Next we divide the subcase into three subcases.

%
%
%
%

%

 \vskip0.2cm
 {\noindent\bf Subcase 2.2.1.  }  There exists a vertex $u\in R\cap V_1$ such that $u_2$ and $u$ form a 2-cycle.
 \vskip0.2cm

Then $\{u, x_0\}\rightarrow u_2$  implies that  $$d(x_0)\ge 2a-2, d(u)\ge a+1\ \mbox{or}\  d(u)\ge 2a-2, d(x_0)\ge a+1. \eqno(3.6)$$

Assume  $a=3$.   Consider the cycle $u_2x_1y_1x_0u_2$ and note that $y_0$ and $u$ are not adjacent.  Similar to Subcase 2.1, we can obtain that  $D$ is isomorphic to the digraph $H_3$. Now assume that $a\ge 4$.

 If $d(u)\ge 2a-2$, then, by Lemma \ref{notmerge},  $4a-4\le d(u_2)+d(u)\le 2m+4(a-m)=4a-2m\le 4a-4$. Hence, equalities  hold everywhere, in particular,  $m=2$ and $d_{R}(u_2)=2(a-m)$.  In other words,  $u_2$ and every vertex of $R\cap V_1$ form a 2-cycle. By Claim 1, $y_0$ and every vertex of $R\cap V_1$ are not adjacent.  Hence  $d(y_0)\le 4$. This together with $d(y_0)\ge a+1$ implies that $a\le 3$, a contradiction.
Hence, we assume that $d(u)<2a-2$. In fact, we may assume that for any $x\in R\cap V_1$, if $x$ and $u_2$ form a 2-cycle, then $d(x)< 2a-2$.
 This also implies that  $u$ is the unique vertex in $R\cap V_1$  which forms a 2-cycle with $u_2$.    Using Lemma \ref{notmerge},  $3a-1\le d(u_2)+d(u)\le 2m+4(a-m)=4a-2m,$ i.e.  $2m\le a+1$.

 If  $2m=a+1$, then equalities hold everywhere.  It follows that $d_{R}(u_2)=2(a-m)$.  In other words,  $u_2$ and every vertex of $R\cap V_1$ form a 2-cycle.  Hence  $|R\cap V_1|=1$. However,  $3a-1\le d(u)+d(u_2)\le 2m+4$ implies that $a\le 3$, a contradiction.

If  $2m<a+1$, then $d(y_0)\ge a+1$ implies that $y_0$ must be adjacent to some vertex of $R\cap V_1$.  Since $y_0$ and $u$ are not adjacent, we have $|R\cap V_1|\ge 2$. Since $d(u_2)\ge 2a-2$, we have that $|R\cap V_1|\le 3$.

Suppose $|R\cap V_1|=3$, i.e. $a=m+3\ge 5$.  Write $R\cap V_1=\{u, w_1, w_2\}$ and $R\cap V_2=\{u_2, v_1,v_2\}$. By the above argument, we can conclude that $u_2$ and every vertex of $V(C)\cap V_1$ form a 2-cycle.   Similar  to Claim 1, we can obtain that $u$ are not adjacent to every $y_i\in V(C)$. Thus, $d(u)\le 6$. This together with $d(u)\ge a+1 $ implies that $a\le 5$ and so $a=5$ and $d(u)=a+1=6$. Hence,  $u$ and every vertex of $R\cap V_2$ form a 2-cycle.   In addition, according to  $d(y_0)\ge a+1=6$ and Claim 1,  either$\{u_2, y_0\}\mapsto w_1$ or $w_1\mapsto \{u_2, y_0\}$ and  $y_0\rightarrow \{x_0, x_1\}\rightarrow y_0$. Without loss of generality, assume that $\{u_2, y_0\}\mapsto w_1$. Since $D$ is strong, there exists a vertex in $V_2\setminus \{u_2, y_0\}$ dominated by $w_1$.  If $w_1\rightarrow y_1$, then $w_1y_1x_0u_2x_1y_0w_1$ is a longer cycle than $C$, a contradiction. If $w_1$ dominates one of $\{v_1, v_2\}$, say $v_1$, then $w_1v_1uu_2x_1y_0w_1$ is a longer cycle than $C$, a contradiction.

Suppose $|R\cap V_1|=2$, i.e. $a=m+2$.  Write $R\cap V_1=\{u, w_1\}$ and $R\cap V_2=\{u_2, v_1\}$.  Note that  $a+1\le d(y_0)\le 2m+1$,  which together with  $2m<a+1$ implies that $a=2m$ and $d(y_0)=2m+1$. Using this with  $a=m+2$, we have that  $m=2$. By Claim 1,  either $\{u_2, y_0\}\mapsto w_1$ or $w_1\mapsto \{u_2, y_0\}$.  Without loss of generality, assume that $\{u_2, y_0\}\mapsto w_1$.   In addition,  $y_0\rightarrow \{x_0, x_1\}\rightarrow y_0$. Since $D$ is strong, there exists a vertex in $V_2\setminus \{y_0, u_2\}$ dominated by $w_1$. If $w_1\rightarrow y_1$, then  $w_1y_1x_0u_2x_1y_0w_1$ is a longer cycle than $C$, a contradiction. If $w_1\rightarrow v_1$, then since $D$ is strong, there exists a vertex in $\{x_0, x_1, u\}$ dominated by $v_1$.  It is not difficult to check that $D$ would contain a longer cycle than $C$, a contradiction.

\vskip0.2cm
 {\noindent\bf Subcase 2.2.2.  } $R\cap V_1\mapsto u_2$.
 \vskip0.2cm

By $d(u_2)\ge 2a-2$, we have $|R\cap V_1|\le 2$.
Suppose $|R\cap V_1|=2$, say $R\cap V_1=\{x, w\}$. In this case, $d(u_2)=2a-2$ and $u_2$ and every vertex of $V(C)\cap V_1$ form a 2-cycle and $\{w, x\}\mapsto u_2$, which implies that $\{x, w\}$ is a dominating pair. Since $C$ ia a longest cycle, we have $(V(C)\cap V_2,  \{x, w\})=\emptyset$.   Hence  $d(w)\le m+3=a-2+3=a+1$ and $d(x)\le a+1$, a contradiction to the fact that $\{x, w\}$ is a dominating pair.

Suppose $|R\cap V_1|=1$, say $R\cap V_1=\{x\}$. Observe that for any $y_i\in V(C)\cap V_2$, if $y_i\rightarrow x$, then $u_2\nrightarrow x_{i+1}$. Clearly,   $y_0\nrightarrow x$. Since $D$ is strong, there exists $y_i\in V(C)\cap V_2$ such that $y_i\rightarrow x$. Assume, without loss of generality, that $y_1\rightarrow x$.  The above observation implies  $u_2\nrightarrow x_2$. This together with $x\mapsto u_2$ implies that $d(u_2)=2a-2$ and $u_2$ and every vertex of $V_2\setminus \{x, x_2\}$ form a 2-cycle and $x_2\mapsto u_2$. Continuing using the observation, we have that $y_i\nrightarrow x$, for every $i\neq 1$, that is to say, $d^-(x)=1$. Thus, $a+1\le d(x)\le 1+(m+1)=a+1$. Note that, for any $x_i\in V(C)\cap V_1$, $\{x_i, x\}$ is a dominating pair and so $d(x_i)\ge 2a-2$, in particular, $d(x_2)\ge 2a-2$.

 Suppose $a\ge 4$. Denote $Q=y_1xu_2C[x_3, y_1]$. It is clear that $Q$ is a cycle of length $2m$. By  $\{y_2, u_2\}\rightarrow x_3$,  we have  that  $\{y_2, u_2\}$ is a  dominating pair, which implies that $d(y_2)\ge a+1$. If $y_2\rightarrow x_2$, then, by Lemma \ref{notmerge},  $3a-1\le d(x_2)+d(y_2)=4+d_{V(Q)}(x_2)+d_{V(Q)}(y_2)\le 2m+4=2a+2$ implies that  $a\le 3$, a contradiction. Hence $y_2\nrightarrow x_2$. This together with $d(x_2)\ge 2a-2$ and $x_2\mapsto u_2$ imply that $d(x_2)=2a-2$ and $x_2$ and every vertex of $V_2\setminus \{u_2, y_2\}$ form a 2-cycle.  For any $x_i\in (V_1\cap V(C))\setminus \{x_2, x_3\}$, $y_2\nrightarrow x_i$, for else  $y_2C[x_i, y_1]xu_2C[x_3, y_{i-1}]x_2y_2$ is a hamiltonian cycle, a contradiction. Thus, $d(y_2)\le 2+m=a+1$. Earlier, we showed that $d(y_2)\ge a+1$. So $d(y_2)=a+1$. Moreover $d^+(y_2)=1$ and $d^-(y_2)=a$. However, $y_1xu_2x_1C[y_2, y_0]x_2y_1$ is a hamiltonian cycle, a contradiction.

Now assume that $a=3$. In this case, $2a-2=a+1=4$. By the above argument, we have obtained that $\{x_0, x\}\mapsto u_2$, $x\mapsto y_0$,  $x\rightarrow y_1\rightarrow x$,  $u_2\rightarrow x_1\rightarrow u_2$, $d(y_0)\ge 4$ and $d(x_0)\ge 4$.   If $y_0\rightarrow x_0$, then $y_0x_0u_2x_1y_1xy_0$ is a hamiltonian cycle, a contradiction. Thus, $x_0\mapsto y_0$. This together with $d(x_0)\ge 4$ and $d(y_0)\ge 4$ imply that  $x_0\rightarrow y_1$ and $x_1\rightarrow y_0$. Note that $D$ is isomorphic to the digraph $H_2$.

\vskip0.2cm
 {\noindent\bf Subcase 2.2.3.  } $u_2\mapsto R\cap V_1$.
 \vskip0.2cm

 Since the proof is similar to Subcase 2.2.2,  we omit them. This completes the proof of the theorem.

\end{proof}

{\bf\noindent The proof of Theorem \ref{mainresult}}.

\begin{proof}
  Using Theorem \ref{sharp}, it suffices to consider that $D$ is isomorphic to one of the digraphs $H_1$, $H_2$ and $H_3$. Now we consider the three cases. Suppose that $D$ is isomorphic to the digraph $H_1$. Then $\{x_1, x_2\}\rightarrow y_2$ implies that $\{x_1, x_2\}$ is a dominating pair. Since $a=2$, we have $2a-1=a+1=3$. By assumption, $d(x_1)\ge 3$. So $x_1\rightarrow y_2$ or $y_2\rightarrow x_1$, say $x_1\rightarrow y_2$. Clearly, $D$ is  hamiltonian. Suppose that $D$ is isomorphic to the digraph $H_2$. Then $\{x_1, x_2\}\rightarrow y_1$ implies that $\{x_1, x_2\}$ is a dominating pair. By assumption, without loss of generality, assume that $d(x_1)\ge 2a-1=5$ and $d(x_2)\ge a+1=4$. So $x_1$ must dominate one of $\{y_2, y_3\}$, say $y_2$. Note that $x_1y_2x_2y_1x_3y_3x_1$ is a hamiltonian cycle. Suppose that $D$ is isomorphic to the digraph $H_3$. Then $\{x_1, x_3\}\rightarrow y_2$ implies that $\{x_1, x_3\}$ is a dominating pair. By assumption, without loss of generality, assume that $d(x_1)\ge 2a-1=5$ and $d(x_3)\ge a+1=4$. So $y_2x_1\in A(D)$ or $x_1y_1\in A(D)$. If $y_2x_1\in A(D)$, then $y_2x_1y_3x_2y_1x_3y_2$ is a hamiltonian cycle; if $x_1y_1\in A(D)$, then $x_1y_1x_3y_2x_2y_3x_1$ is a hamiltonian cycle.
\end{proof}

It is natural to propose the following problem.

\begin{problem}
  Consider a strong balanced bipartite digraph on $2a$ vertices where $a\ge 4$. Suppose that $D$ satisfies the condition $B_k$ with $2\le k\le a/2$. Is $D$ hamiltonian?
\end{problem}


\section{Remark}

If $D$ is a strong balanced bipartite digraph on $2a$ vertices such that $d(x)+d(y)\ge 3a$ for every pair of dominating and every pair of dominated vertices $\{x,y\}$, then the following theorem shows that $D$ contains a cycle factor. However $D$ may contain no hamiltonian cycle, see the digraph $H_1$. Here $a=2$. Note that $\{x_1,x_2\}$ and $\{y_1, y_2\}$ are both dominating  and dominated pairs, where $d(x_1)+d(x_2)=3a$ and $d(y_1)+d(y_2)=3a$. Clearly, $D$ has no hamiltonian cycle.

\begin{proposition} Let $D$ be a strong balanced bipartite digraph on $2a$ vertices. Suppose that $d(x)+d(y)\ge 3a$ for every pair of dominating and every pair of dominated vertices $\{x,y\}$. Then $D$ contains a cycle factor. \end{proposition}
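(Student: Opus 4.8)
The plan is to reduce the existence of a cycle factor to two Hall-type conditions, exactly as in the proof of Lemma \ref{cyclefactor}. By the observation closing that lemma, $D$ has a cycle factor if and only if it admits a perfect matching from $V_1$ to $V_2$ and one from $V_2$ to $V_1$, and by the K\"{o}nig--Hall theorem each of these is equivalent to $|N^+(S)|\ge |S|$ for every $S\subseteq V_1$ (respectively every $S\subseteq V_2$). By the symmetry of the hypothesis it suffices to treat the direction $V_1\to V_2$. So I would argue by contradiction and fix a nonempty $S\subseteq V_1$ with $|N^+(S)|<|S|$, writing $n=|N^+(S)|$, $s=|S|$ and $W=V_2\setminus N^+(S)$, where $W\neq\emptyset$ and $N^+(S)\subseteq V_2$.

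First I would dispose of the extreme cases. If $s=1$, then the single vertex of $S$ has out-degree $0$; if $s=a$, then some vertex of $W$ receives no arc at all. Both contradict strong connectivity, so $2\le s\le a-1$ and $1\le n<s$. The heart of the argument is then to manufacture a dominating pair \emph{and} a dominated pair so as to invoke both $3a$ hypotheses at once. Since each vertex of $S$ sends all of its (at least one) out-arcs into the strictly smaller set $N^+(S)$, a pigeonhole on the map $S\to N^+(S)$ yields $x_1,x_2\in S$ with a common out-neighbor, so $\{x_1,x_2\}$ is a dominating pair. As no arc leaves $S$ for $W$, each $x_i$ has $d(x_i)\le n+a$, whence $3a\le d(x_1)+d(x_2)\le 2a+2n$ forces $n\ge a/2$. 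Dually, each vertex of $W$ receives its (at least one) in-arcs only from $V_1\setminus S$; since $|W|=a-n>a-s=|V_1\setminus S|$, a second pigeonhole produces $w_1,w_2\in W$ with a common in-neighbor in $V_1\setminus S$, so $\{w_1,w_2\}$ is a dominated pair. Because $W$ receives no arc from $S$, each $w_i$ has $d(w_i)\le 2a-s$, whence $3a\le d(w_1)+d(w_2)\le 4a-2s$ forces $s\le a/2$. Combining the two bounds with $n<s$ gives $n<s\le a/2\le n$, a contradiction. Thus Hall's condition holds for $V_1\to V_2$, and by the analogous argument for $V_2\to V_1$; both perfect matchings exist, so $D$ contains a cycle factor.

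The step I expect to require the most care is the simultaneous extraction of the two pairs together with the bookkeeping of the degree bounds. One must check precisely that the out-arcs of $S$ land only in $N^+(S)$ (giving $d^+(x_i)\le n$) and that the in-arcs of $W$ come only from $V_1\setminus S$ (giving $d^-(w_i)\le a-s$), and then recognize that the single strict inequality $n<s$ is exactly what powers both pigeonhole counts \emph{and} closes the gap between the two resulting bounds $n\ge a/2$ and $s\le a/2$. Conceptually the only real obstacle is realizing that the hypothesis must be applied to a dominating pair and a dominated pair living on opposite sides of the deficient set; once both pairs are located the estimate is routine.
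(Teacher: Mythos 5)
Your proof is correct and follows essentially the same route as the paper: reduce to the K\"{o}nig--Hall condition, and for a deficient set $S$ extract a dominating pair inside $S$ and a dominated pair inside $V_2\setminus N^+(S)$ to contradict the degree-sum hypothesis. The only difference is organizational: the paper splits into the cases $|S|>a/2$ and $|S|\le a/2$ and invokes just one of the two pairs in each, whereas you extract both pairs unconditionally (both pigeonholes are powered by the single inequality $|N^+(S)|<|S|$) and combine the bounds into $a/2\le |N^+(S)|<|S|\le a/2$, a slightly cleaner packaging of the same estimates.
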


\begin{proof} Let $V_1$ and $V_2$ denote the partite sets of $D$. Observe that $D$ contains a cycle factor if and only if there exist both a perfect matching from $V_1$ to $V_2$ and a perfect matching from $V_2$ to $V_1$. Therefore, by K\"{o}nig-Hall theorem, it suffices to show that $|N^+(S)|\ge |S|$ for every $S\subset V_1$ and $|N^+(T)|\ge |T|$ for every $T\subset V_2$.

For a proof by contradiction, suppose that a non-empty set $S\subset V_1$ is such that $|N^+(S)|<|S|$. Then $V_2\setminus N^+(S)\neq\emptyset$ and for every $y\in V_2\setminus N^+(S)$, we have $d^-(y)\le a-|S|$, hence $$d(y)\le 2a-|S|.\eqno(4.1)$$ If $|S|=1$, say $S=\{x\}$, then $|N^+(S)|=0$, which means $d^+(x)=0$, a contradiction to the fact that $D$ is strong. If $|S|=a$, then $d^-(y)=0$ for every $y\in V_2\setminus N^+(S)$, a contradiction to the fact that $D$ is strong. Thus $2\le |S|\le a-1$.  We now consider the following two cases.
\vskip 0.2cm
{\bf Case 1.} $\frac{a}{2}<|S|\le a-1$.
\vskip 0.2cm
In this case, $|V_2\setminus N^+(S)|\ge 2$ and $|V_1\setminus S|<|V_2\setminus N^+(S)|$. Since $D$ is strong, for every $y\in V_2\setminus N^+(S)$, $d^-(y)\ge 1$. This together with $|V_1\setminus S|<|V_2\setminus N^+(S)|$ implies that there exist $y_1, y_2\in V_2\setminus N^+(S)$ such that $\{y_1, y_2\}$ is a dominated pair. By assumption, $d(y_1)+d(y_2)\ge 3a$. But, by (4.1), $d(y_1)+d(y_2)\le 2(2a-|S|)=4a-2|S|<3a$, a contradiction.

\vskip 0.2cm
{\bf Case 2.} $2\le |S|\le \frac{a}{2}$.
\vskip 0.2cm

If this is so, then, for every $x\in S$, we have $$d(x)=d^-(x)+d^+(x)\le a+(|S|-1)\le \frac{3a}{2}-1.\eqno(4.2)$$ Since $D$ is strong, for every $x\in S$, $d^+(x)\ge 1$. This together with $|N^+(S)|<|S|$ implies that there exist $x_1, x_2\in S$ such that $\{x_1,x_2\}$ is a dominating pair. By assumption, $d(x_1)+d(x_2)\ge 3a$. But, by (4.2),$d(x_1)+d(x_2)\le 3a-2$, a contradiction.

This completes the proof of existence of a perfect matching from $V_1$ to $V_2$. The proof for a matching in the opposite direction is analogous. The proof of the theorem is complete.\end{proof}

\section{Acknowledgements} 

This work is supported by the National Natural Science Foundation for Young Scientists of China\\ (11401354)(11501341)(11401353)(11501490).

\end{document}